\documentclass[11pt]{amsart}

\usepackage{amsmath, amsthm, amssymb, amsfonts, amscd, verbatim, setspace}

\def\CC{{\mathbb C}}

\def\KK{{\mathbb K}}
\def\MM{{\mathbb M}}
\def\OO{{\mathbb O}}

\def\PP{{\mathbb P}}

\def\RR{{\mathbb R}}

\def\ZZ{{\mathbb Z}}

\def\Bsf{{\mathsf B}}
\def\Esf{{\mathsf E}}

\def\Psf{{\mathsf P}}

\def\hhat{{\hat h}}

\def\0{{\mathbf 0}}
\def\1{{\mathbf 1}}

\def\Ccal{{\mathcal C}}

\def\Ecal{{\mathcal E}}

\def\Ocal{{\mathcal O}}

\def\Qcal{{\mathcal Q}}
\def\Rcal{{\mathcal R}}
\def\Scal{{\mathcal S}}

\def\Kbar{{\bar K}}

\def\Aut{\mathrm{Aut}}

\def\diag{\mathrm{diag}}

\def\diam{\mathrm{diam}}

\def\ns{\mathrm{ns}}

\def\tor{\mathrm{tor}}

\theoremstyle{plain}

\newtheorem{thm}{Theorem}
\newtheorem{cor}[thm]{Corollary}
\newtheorem{prop}[thm]{Proposition}
\newtheorem{lem}[thm]{Lemma}

\addtolength{\topmargin}{-.5in}
\setlength{\textwidth}{6in}       
\setlength{\oddsidemargin}{.25in}              
\setlength{\evensidemargin}{.25in}         
\setlength{\textheight}{9in}

\reversemarginpar           


\begin{document}

\title[Non-archimedean Equidistribution on Elliptic Curves]{Non-archimedean Equidistribution on Elliptic Curves with Global Applications}
\author{Clayton Petsche}
\address{Department of Mathematics and Statistics; Hunter College; 695 Park Avenue; New York, NY 10065 U.S.A.}
\email{cpetsche@hunter.cuny.edu}
\date{First version December 3, 2007; latest revision April 8, 2009.}
\subjclass[2000]{11G07, 11G05, 11G50} 
\begin{abstract}
Let $\KK$ be an algebraically closed, complete, non-archimedean field, let $E/\KK$ be an elliptic curve, and let $\Esf$ denote the Berkovich analytic space associated to $E/\KK$.  We study the $\mu$-equidistribution of finite subsets of $E(\KK)$, where $\mu$ is a certain canonical unit Borel measure on $\Esf$.  Our main result is an inequality bounding the error term when testing against a certain class of continuous functions on $\Esf$.  We then give two applications to elliptic curves over global function fields: we prove a function field analogue of the Szpiro-Ullmo-Zhang equidistribution theorem for small points, and a function field analogue of a result of Baker-Ih-Rumely on the finiteness of $S$-integral torsion points.  Both applications are given in explicit quantitative form.
\end{abstract}

\maketitle


\section{Introduction}

\subsection{The main local inequality}

Let $\KK$ be a field which is algebraically closed and complete with respect to a non-trivial, non-archimedean absolute value $|~\cdot~|$, and let $E/\KK$ be an elliptic curve.  The main result of this paper is an inequality which measures the equidistribution of finite subsets of $E(\KK)$ with respect to a certain canonical unit Borel measure $\mu$ defined on the Berkovich analytic space $\Esf$ associated to $E/\KK$.  

We will give a thorough review of the space $\Esf$ in $\S$\ref{BerkAnalyticSection}, but briefly it is a path-connected, compact, Hausdorff topological space which contains $E(\KK)$ (with the topology induced by the absolute value on $\KK$) as a dense subspace.  The set $\Esf\setminus E(\KK)$ is endowed with a canonical path metric $\rho(x,y)$ giving it the structure of an (infinite) metrized graph.  In particular, there exists a distinguished finite subgraph $\Sigma$ of $\Esf$ called the {\em skeleton}, along with a strong deformation retraction $r_\Sigma:\Esf\to\Sigma$, such that each connected component of $\Esf\setminus\Sigma$ is homeomorphic to a simply connected open Berkovich disc $\Bsf^\circ$.  If the $j$-invariant $j_E$ is integral ($|j_E|\leq1$) the set $\Sigma$ consists of a single point, and so in particular $\Esf$ is simply connected.  If $j_E$ is not integral ($|j_E|>1$) the set $\Sigma$ is isometric to a circle of circumference $\log|j_E|$, and in this case the fundamental group of $\Esf$ is infinite cyclic.  The canonical unit Borel measure $\mu$ on $\Esf$ is supported on $\Sigma$, and is defined as follows: if $j_E$ is integral $\mu$ is the Dirac measure supported at the point $\Sigma$, while if $j_E$ is not integral $\mu$ is the normalized uniform measure supported on the circle $\Sigma$.
 
Let $Z$ be a nonempty finite subset of $E(\KK)$.  In $\S$\ref{MainLocalSection} we will define a nonnegative real number $D(Z)$ called the local discrepancy of $Z$, which is closely related to the N\'eron local height function on $E(\KK)$, and which is small precisely when the set $Z$ is close to being $\mu$-equidistributed.  Let $\Ccal(\Esf,\RR)$ denote the space of continuous real valued functions on $\Esf$.  Our main local result is summarized in the following theorem.

\begin{thm}\label{MainLocalThmIntro}
There exists a dense subspace $\Scal(\Esf,\RR)$ of $\Ccal(\Esf,\RR)$ such that 
\begin{equation*}
\bigg|\frac{1}{|Z|}\sum_{P\in Z}F(P)-\int F d\mu\bigg| \leq C_1(F)\bigg(D(Z) + \frac{C_2(F)}{|Z|}\bigg)^{1/2}
\end{equation*}
for all $F\in\Scal(\Esf,\RR)$ and all nonempty finite subsets $Z$ of $E(\KK)$, where $C_1(F)$ and $C_2(F)$ are constants depending only on $F$.
\end{thm}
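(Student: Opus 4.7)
\emph{Proof strategy.} The approach is potential-theoretic on the Berkovich curve $\Esf$, modeled on classical energy arguments. The key ingredient is a canonical symmetric Arakelov-Green kernel $g_\mu$ on $\Esf\times\Esf$, continuous off the diagonal, normalized so that $\int g_\mu(x,y)\,d\mu(y)\equiv 0$, and closely tied to the N\'eron local height function on $E(\KK)$. The discrepancy $D(Z)$ to be defined in $\S$\ref{MainLocalSection} will arise as a regularized energy of the signed measure $\mu_Z-\mu$, where $\mu_Z=\frac{1}{|Z|}\sum_{P\in Z}\delta_P$; concretely it should take the shape of the truncated off-diagonal sum $\frac{1}{|Z|^2}\sum_{P\neq Q}g_\mu(P,Q)$, up to a sign and a normalization. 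The abstract engine behind the proof is that the associated pairing $(\nu_1,\nu_2)_\mu := \int\int g_\mu\,d\nu_1 d\nu_2$ is positive semi-definite on the subspace of signed Borel measures of total mass zero, which is the non-archimedean analogue of the equilibrium property of $\mu$.

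I would take $\Scal(\Esf,\RR)$ to be the space of functions of the form $F(x) = c + \int g_\mu(x,y)\,d\nu(y)$, where $c\in\RR$ and $\nu$ is a signed Borel measure on $\Esf$ of total mass zero and finite energy $I(\nu) := (\nu,\nu)_\mu$. Density of $\Scal$ in $\Ccal(\Esf,\RR)$ should follow from a Stone-Weierstrass-type argument, using that the potentials $y\mapsto g_\mu(x_0,y)$ separate points of $\Esf$, combined with harmonic-function approximation on the skeleton $\Sigma$ and classical potential-theoretic approximation on the open-disc components of $\Esf\setminus\Sigma$. For any such $F$, the normalization of $g_\mu$ gives $\int F\,d\mu = c$, and Fubini yields the integral representation
\begin{equation*}
\frac{1}{|Z|}\sum_{P\in Z}F(P) - \int F\,d\mu = \int\int g_\mu(x,y)\,d(\mu_Z-\mu)(x)\,d\nu(y).
\end{equation*}
Since $\mu_Z-\mu$ and $\nu$ both have total mass zero, the Cauchy-Schwarz inequality for $(\cdot,\cdot)_\mu$ bounds the square of this quantity by $I(\nu)\cdot I(\mu_Z-\mu)$. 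Setting $C_1(F) := I(\nu)^{1/2}$, the theorem reduces to the energy estimate
\begin{equation*}
I(\mu_Z-\mu) \leq D(Z) + \frac{C_2(F)}{|Z|}.
\end{equation*}

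Expanding $I(\mu_Z-\mu)$ and using $\int g_\mu\,d\mu\equiv 0$ collapses the cross and pure-$\mu$ terms, leaving the self-energy $(\mu_Z,\mu_Z)_\mu = \frac{1}{|Z|^2}\sum_{P,Q\in Z}g_\mu(P,Q)$, which is formally infinite because of the diagonal singularity of $g_\mu$. A standard regularization replaces $g_\mu$ by a truncation $g_\mu^{\epsilon}$ at height $\epsilon$; the off-diagonal part then reproduces $D(Z)$ while the diagonal contributes a self-energy of size $O(1/|Z|)$, and the error in substituting $g_\mu^\epsilon$ for $g_\mu$ on the left-hand side is absorbed into a term of order $C_2(F)/|Z|$ via a modulus-of-continuity bound on the $\nu$-potential. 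The main technical obstacles I expect are: (a) establishing positive semi-definiteness of $(\cdot,\cdot)_\mu$ on mass-zero measures in the Berkovich setting, which in the multiplicative-reduction case requires assembling $g_\mu$ from the Arakelov Green function on the circle $\Sigma$ together with classical non-archimedean Green functions on each residue disc; and (b) verifying density of $\Scal(\Esf,\RR)$ in $\Ccal(\Esf,\RR)$ and making the diagonal regularization uniform in $Z$. Once these ingredients are in place, the proof is a formal manipulation of the energy pairing.
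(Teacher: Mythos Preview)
Your potential-theoretic strategy is viable and is essentially the Favre--Rivera-Letelier approach, but the paper proceeds quite differently and more elementarily. Instead of taking $\Scal(\Esf,\RR)$ to be Green potentials of signed measures, the paper defines it as the space of functions $F$ that factor through the retraction $r_\Gamma:\Esf\to\Gamma$ onto some finite connected metrized subgraph $\Gamma\supseteq\Sigma$, with $F'\in L^2(\Gamma)$ in the ordinary one-variable sense along edges; the constants are then completely explicit, $C_1(F)=\|F'\|_{L^2(\Gamma)}$ and $C_2(F)=\ell_0(\Gamma)$ (the maximal distance from $\Gamma$ to the skeleton). The proof builds, by hand, an auxiliary function $G:\Gamma\to\RR$ satisfying $\frac{1}{|Z|}\sum_P F(P)-\int F\,d\mu=\int_\Gamma F'G\,dm_\Gamma$ and $\|G\|_{L^2(\Gamma)}^2\le D(Z)+\ell_0(\Gamma)/|Z|$, after which ordinary Cauchy--Schwarz on $L^2(\Gamma)$ finishes. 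On the skeleton $G$ is assembled from first Bernoulli polynomials and its $L^2$-norm is computed via Parseval on $\RR/\ell\ZZ$; on $\Gamma\setminus\Sigma$ one sets $G(x)=\frac{1}{|Z|}\sum_n\chi_{[r_\Sigma(P_n),\,r_\Gamma(P_n)]}(x)$, and the $L^2$-norm becomes a sum of segment-overlap lengths bounded by the $i(P_m,P_n)$ terms in $D(Z)$. This sidesteps both obstacles you flag: no positive-semidefiniteness lemma is needed (Cauchy--Schwarz is applied in $L^2(\Gamma)$, not in an energy pairing), and there is no diagonal regularization, since the test functions have ``Laplacian'' supported on $\Gamma\subset\Esf\setminus E(\KK)$, away from the atoms of $\mu_Z$. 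Your approach is more general and conceptual; the paper's buys explicit, easily computable constants that feed directly into the quantitative global bounds, and the paper itself contrasts its elementary method with the potential-theoretic one when discussing \cite{FavreRiveraLetelier}.
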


Roughly, $\Scal(\Esf,\RR)$ is the space of continuous functions $F:\Esf\to\RR$ whose derivative $F'$ is supported on some finite subgraph of $\Esf$ and is square-integrable.  The constant $C_1(F)$ is essentially the $L^2$-norm of $F'$, and $C_2(F)$ is a certain quantity measuring the size of the support of $F'$.  In $\S$\ref{MainLocalSection} we will restate this result with precise definitions of $\Scal(\Esf,\RR)$, $C_1(F)$, and $C_2(F)$.  As applications of Theorem \ref{MainLocalThmIntro} we prove several global results which are summarized in the following two sections.

\subsection{The global equidistribution theorem}

Let $K$ be a global function field; we will state precisely what this means in $\S$\ref{GlobalSection}, but an example is the field $K=k(C)$ of rational functions on an integral, proper, geometrically connected curve $C$ defined over an arbitrary field $k$.  At each place $v$ of $K$, let $K_v$ be the completion of $K$ at $v$ and let $\KK_v$ be the completion of the algebraic closure of $K_v$; the field $\KK_v$ is both complete and algebraically closed (\cite{BGR}, $\S$3.4).  

Let $E/K$ be an elliptic curve, and at each place $v$ of $K$ let $\Esf_v$ be the Berkovich analytic space associated to $E/\KK_v$; more generally, we affix a subscript $v$ to all local objects associated to $E/\KK_v$, including the canonical measure $\mu_v$ on $\Esf_v$.  Recall that the N\'eron-Tate canonical height function $\hhat:E(\Kbar)\to\RR$ is nonnegative and vanishes on the torsion subgroup $E(\Kbar)_\tor$ of $E(\Kbar)$.  Given a finite set $Z$ of points in $E(\Kbar)$, define the height of $Z$ to be the average $\hhat(Z)=\frac{1}{|Z|}\sum_{P\in Z}\hhat(P)$ of the heights of its points.  Using Theorem \ref{MainLocalThmIntro} we will prove the following inequality measuring the local $\mu_v$-equidistribution of a large set $Z$ of small global points in $E(\Kbar)$.

\begin{thm}\label{QuanSUZIntro}
Let $E/K$ be an elliptic curve over a global function field, let $v$ be a place of $K$, and let $\epsilon:\Kbar\hookrightarrow\KK_v$ be a $K$-embedding.  There exists a dense subspace $\Scal(\Esf_v,\RR)$ of $\Ccal(\Esf_v,\RR)$ such that 
\begin{equation*}
\bigg|\frac{1}{|Z|}\sum_{P\in Z}F(\epsilon(P))-\int F d\mu_v\bigg| \leq C_1(F)\bigg(4\hhat(Z) + \frac{h(j_E)}{12|Z|} + \frac{C_2(F)}{|Z|}\bigg)^{1/2}
\end{equation*}
for all $F\in \Scal(\Esf_v,\RR)$ and all nonempty finite $\Aut(\Kbar/K)$-stable subsets $Z$ of $E(\Kbar)$, where $h(j_E)$ denotes the absolute Weil height of the $j$-invariant of $E$, and where $C_1(F)$ and $C_2(F)$ are the same constants associated to the function $F\in \Scal(\Esf_v,\RR)$ as in Theorem~\ref{MainLocalThmIntro}.
\end{thm}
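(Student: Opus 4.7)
The plan is to deduce this global inequality from the local main theorem (Theorem~\ref{MainLocalThmIntro}) applied at the single place $v$, by bounding the local discrepancy $D_v(\epsilon(Z))$ from above in terms of the global quantities $\hhat(Z)$ and $h(j_E)$. Once such a bound is in hand, substitution into the local inequality produces exactly the statement claimed. The overall strategy, standard in arithmetic equidistribution of Szpiro--Ullmo--Zhang type, rests on three ingredients: (i) an energy interpretation of $D_w(Z_w)$ at each place $w$ of $K$ as an expression $\frac{1}{|Z|^2}\sum_{P\neq Q}g_w(P,Q)$, where $g_w$ is the Arakelov--Green function attached to $(E/\KK_w,\mu_w)$ and $Z_w$ denotes the image of $Z$ under a chosen embedding $\Kbar\hookrightarrow\KK_w$; (ii) the nonnegativity $D_w(Z_w)\geq 0$ at every place, which yields $D_v(\epsilon(Z))\leq\sum_w D_w(Z_w)$; and (iii) a global identity bounding $\sum_w D_w(Z_w)$ above by $4\hhat(Z)+h(j_E)/(12|Z|)$.

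Ingredients (i) and (ii) should follow essentially from the definitions to be given in $\S$\ref{MainLocalSection}: the Arakelov--Green kernel $g_w$ differs from the N\'eron local pairing $\hat\lambda_w(P-Q)$ only by an additive normalization constant, and the energy form associated to a probability measure is positive semidefinite (the non-archimedean counterpart of the classical fact that mutual energy minus self-energy is nonnegative for discrete measures).

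Ingredient (iii) is the heart of the argument. Using the N\'eron decomposition $\hhat(P-Q)=\sum_w\hat\lambda_w(P-Q)$, valid for $P\neq Q$, together with the parallelogram law, one has
\begin{equation*}
\frac{1}{|Z|^2}\sum_{P\neq Q}\hhat(P-Q)\;\leq\;\frac{4(|Z|-1)}{|Z|}\hhat(Z)\;\leq\;4\hhat(Z).
\end{equation*}
The $\Aut(\Kbar/K)$-stability of $Z$ together with the product formula then allows the double sum on the left to be reorganized as a sum over places of $K$, converting $\sum_w\hat\lambda_w(P-Q)$ into $\sum_w D_w(Z_w)$ plus a correction coming from the additive constants distinguishing $g_w$ from $\hat\lambda_w$. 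That correction should contribute approximately $\frac{1}{12}\log\max(1,|j_E|_w)$ at each place $w$; summing over $w$ yields $h(j_E)/12$, and dividing by the normalizing factor $|Z|$ produces the claimed term $h(j_E)/(12|Z|)$.

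The main technical obstacle I anticipate is the careful verification of the coefficient $1/12$ and the precise $h(j_E)$ dependence in step (iii). This amounts to tracking how the Arakelov--Green normalization constant at a place $w$ is related to the valuation of the minimal discriminant of $E/\KK_w$, and hence to $\log\max(1,|j_E|_w)$ via the classical relation $j_E=c_4^3/\Delta$ at places of multiplicative reduction, with the factor $1/12$ reflecting the power in the discriminant--$j$-invariant identity. Once this accounting is done, the rest is a routine combination of Theorem~\ref{MainLocalThmIntro} with the bound $D_v(\epsilon(Z))\leq 4\hhat(Z)+h(j_E)/(12|Z|)$.
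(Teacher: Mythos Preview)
Your proposal is correct and follows essentially the same route as the paper: bound the weighted sum $\sum_{w\in M_L}\frac{[L_w:K_w]}{[L:K]}D_w(Z)$ globally via the N\'eron decomposition and the parallelogram law, drop the nonnegative terms at places $w\nmid v$, and use $\Aut(\bar K/K)$-stability of $Z$ together with normality of $L/K$ to identify all $D_w(Z)$ for $w\mid v$ with $D_v(\epsilon(Z))$, then feed the resulting bound into Theorem~\ref{MainLocalThmIntro}.

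Two minor remarks. First, your inequality $D_v(\epsilon(Z))\le\sum_w D_w(Z_w)$ should really be stated with the local-degree weights $\frac{[L_w:K_w]}{[L:K]}$; the unweighted sum does not match the N\'eron decomposition of $\hhat$. The local degree formula then collapses the contributions over $v$ to exactly $D_v(\epsilon(Z))$. Second, the ``main technical obstacle'' you anticipate regarding the constant $1/12$ is not an obstacle at all: in the paper's setup the local discrepancy is \emph{defined} as
\[
D_w(Z)=\frac{1}{N^2}\sum_{m\ne n}\lambda_w(P_m-P_n)+\frac{1}{12N}\log^+|j_E|_w,
\]
so summing the second term over places, with weights, gives $h(j_E)/(12N)$ directly by the definition of the Weil height. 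No discriminant bookkeeping is needed.
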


Using the fact that $\Scal(\Esf_v,\RR)$ is dense in $\Ccal(\Esf_v,\RR)$, we will deduce the following corollary, which is a function field analogue of the Szpiro-Ullmo-Zhang equidistribution theorem for elliptic curves.

\begin{cor}\label{MainGlobalCorIntro}
Let $E/K$ be an elliptic curve over a global function field, let $v$ be a place of $K$, and let $\epsilon:\Kbar\hookrightarrow\KK_v$ be a $K$-embedding.  Let $\{Z_n\}_{n=1}^{\infty}$ be a sequence of finite $\Aut(\Kbar/K)$-stable subsets of $E(\Kbar)$ with $\hhat(Z_n)\to0$ and $|Z_n|\to+\infty$.  Then
\begin{equation}\label{MainGlobalCorLimForm}
\lim_{n\to+\infty}\frac{1}{|Z_n|}\sum_{P\in Z_n}F(\epsilon(P))=\int F d\mu_v
\end{equation}
for all continuous functions $F:\Esf_v\to\RR$.
\end{cor}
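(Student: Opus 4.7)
The plan is to deduce the qualitative convergence in \eqref{MainGlobalCorLimForm} from the quantitative inequality of Theorem~\ref{QuanSUZIntro} by a standard density/approximation argument. Since $\Esf_v$ is compact Hausdorff, the space $\Ccal(\Esf_v,\RR)$ is naturally equipped with the sup-norm $\|\cdot\|_\infty$, and ``dense'' must be understood in this topology.

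First I would handle the case $F\in\Scal(\Esf_v,\RR)$. For such $F$, Theorem~\ref{QuanSUZIntro} applies to each $Z_n$, giving
\begin{equation*}
\bigg|\frac{1}{|Z_n|}\sum_{P\in Z_n}F(\epsilon(P))-\int F d\mu_v\bigg| \leq C_1(F)\bigg(4\hhat(Z_n) + \frac{h(j_E)}{12|Z_n|} + \frac{C_2(F)}{|Z_n|}\bigg)^{1/2}.
\end{equation*}
The constants $C_1(F)$, $C_2(F)$, and $h(j_E)$ depend only on $F$ and $E$, not on $n$. Since $\hhat(Z_n)\to 0$ and $|Z_n|\to\infty$ by hypothesis, the right-hand side tends to $0$, which establishes \eqref{MainGlobalCorLimForm} for every $F\in\Scal(\Esf_v,\RR)$.

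Second, I would extend this to arbitrary $F\in\Ccal(\Esf_v,\RR)$ by a three-epsilon argument. Fix such an $F$ and $\delta>0$. By density of $\Scal(\Esf_v,\RR)$ in $\Ccal(\Esf_v,\RR)$, choose $G\in\Scal(\Esf_v,\RR)$ with $\|F-G\|_\infty<\delta$. Then for every $n$,
\begin{equation*}
\bigg|\frac{1}{|Z_n|}\sum_{P\in Z_n}F(\epsilon(P))-\frac{1}{|Z_n|}\sum_{P\in Z_n}G(\epsilon(P))\bigg|\leq \|F-G\|_\infty<\delta,
\end{equation*}
and similarly $\big|\int F\,d\mu_v-\int G\,d\mu_v\big|<\delta$ since $\mu_v$ is a unit measure. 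Combining these with the limit established for $G$ in the first step, the triangle inequality gives
\begin{equation*}
\limsup_{n\to\infty}\bigg|\frac{1}{|Z_n|}\sum_{P\in Z_n}F(\epsilon(P))-\int F d\mu_v\bigg|\leq 2\delta,
\end{equation*}
and letting $\delta\to 0$ yields \eqref{MainGlobalCorLimForm}.

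There is essentially no obstacle here: the only ingredients are the quantitative bound of Theorem~\ref{QuanSUZIntro} and the asserted density, both of which are given. The real content of the corollary lies upstream, in proving Theorem~\ref{QuanSUZIntro} and verifying that $\Scal(\Esf_v,\RR)$ is sup-norm dense in $\Ccal(\Esf_v,\RR)$; once these are in hand, the deduction above is routine.
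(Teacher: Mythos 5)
Your proof is correct and follows essentially the same approach as the paper: establish the limit for test functions in $\Scal(\Esf_v,\RR)$ via Theorem~\ref{QuanSUZIntro}, then use sup-norm density together with a triangle-inequality (three-epsilon) argument to extend to all of $\Ccal(\Esf_v,\RR)$. The paper's proof is identical in structure, using $\theta$ in place of your $\delta$ and arriving at the same $\limsup \leq 2\theta$ bound.
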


We will prove Theorem~\ref{QuanSUZIntro} and Corollary~\ref{MainGlobalCorIntro} in $\S$\ref{GlobalSection}.  

\subsection{The finiteness of $S$-integral torsion points}

Let $K$ be a global function field or a number field, and let $E/K$ be an elliptic curve given by a Weierstrass equation
\begin{equation}\label{Weierstrass}
y^2 + a_1xy + a_3y = x^3 + a_2x^2 + a_4x + a_6.
\end{equation}
Let $S$ be a finite set of places of $K$ (including the archimedean places if $K$ is a number field) such that $(\ref{Weierstrass})$ is defined over the ring $\Ocal_S = \{a\in K \mid |a|_v\leq1 \text{ for all places }v\not\in S\}$ of $S$-integers in $K$.  Given two points $P,Q\in E(\Kbar)$, we say that {\em $P$ is $S$-integral with respect to $Q$} if the Zariski closures of $P$ and $Q$ do not meet in the model $\Ecal/\Ocal_S$ of $E$ associated to the Weierstrass equation $(\ref{Weierstrass})$.  

For fixed $Q\in E(\Kbar)$, let $\Ecal(\bar{\Ocal}_S,Q)$ denote the set of points in $E(\Kbar)$ which are $S$-integral with respect to $Q$, and let $\Ecal(\bar{\Ocal}_S,Q)_\tor=E(\Kbar)_\tor\cap\Ecal(\bar{\Ocal}_S,Q)$ be the set of {\em torsion} points which are $S$-integral with respect to $Q$.  In the number field case, Baker-Ih-Rumely \cite{BakerIhRumely} have shown that if $Q\in E(\Kbar)$ is a nontorsion point, then $\Ecal(\bar{\Ocal}_S,Q)_\tor$ is finite.  More generally, Ih has conjectured analogous finiteness results for torsion points on abelian varieties and preperiodic points for dynamical systems on $\PP^1$ over number fields.  We will prove the following function field analogue of Baker-Ih-Rumely's theorem for elliptic curves.

\begin{thm}\label{BakerIhRumelyFF}
Let $E/K$ be an elliptic curve over a global function field.  Let $(\ref{Weierstrass})$ be a Weierstrass equation for $E/K$, let $S$ be a finite set of places of $K$ such that $(\ref{Weierstrass})$ is defined over $\Ocal_S$, and let $\Ecal/\Ocal_S$ denote the associated $S$-integral model for $E$.  If $Q$ is a point in $E(\Kbar)$ with $\hhat(Q)>0$, then the set $\Ecal(\bar{\Ocal}_S,Q)_\tor$ is finite.
\end{thm}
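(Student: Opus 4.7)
I would argue by contradiction, adapting the strategy of Baker-Ih-Rumely \cite{BakerIhRumely} from the number field setting, with Corollary~\ref{MainGlobalCorIntro} as the main equidistribution input.

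Suppose $\Ecal(\bar{\Ocal}_S,Q)_\tor$ is infinite. Since the set is stable under $\Aut(\Kbar/K)$, it is a union of Galois orbits. A standard argument on the growth of Galois orbits of torsion (the orbit of a point of exact order $m$ grows with $m$, while the torsion of any fixed order is finite) produces a sequence of distinct $\Aut(\Kbar/K)$-orbits $\{Z_n\}_{n\geq 1}\subseteq \Ecal(\bar{\Ocal}_S,Q)_\tor$ with $|Z_n|\to\infty$. Each $Z_n$ consists of torsion points, so $\hhat(Z_n)=0$, and Corollary~\ref{MainGlobalCorIntro} applies at every place $v$ of $K$: the orbits $Z_n$ are $\mu_v$-equidistributed in $\Esf_v$.

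Let $\lambda_v$ denote the N\'eron local height function at $v$, normalized so that the sum-of-local-heights formula
\begin{equation*}
\hhat(R) \;=\; \sum_v n_v\,\lambda_v(\epsilon_v(R))
\end{equation*}
holds for $R\in E(\Kbar)\setminus\{O\}$, where $\epsilon_v:\Kbar\hookrightarrow\KK_v$ are chosen embeddings and $n_v>0$ are the weights arising from the normalization of the places. Because $\hhat(Q)>0$, $Q$ is nontorsion and $P-Q\ne O$ for every $P\in Z_n$; the parallelogram law together with $\hhat(P)=0$ yields $\hhat(P-Q)=\hhat(Q)$. Averaging this identity over $Z_n$ gives
\begin{equation*}
\hhat(Q) \;=\; \sum_v n_v\cdot\frac{1}{|Z_n|}\sum_{P\in Z_n}\lambda_v\bigl(\epsilon_v(P)-\epsilon_v(Q)\bigr).
\end{equation*}
For each $v\notin S$, where $E$ has good reduction, the $S$-integrality of $P$ with respect to $Q$ forces $\lambda_v(\epsilon_v(P)-\epsilon_v(Q))=0$, so only the finitely many terms with $v\in S$ survive.

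The heart of the argument is then to pass to the limit $n\to\infty$ in each such term via equidistribution, with the natural test function $F_v(z)=\lambda_v(z-\epsilon_v(Q))$. \emph{The main obstacle} is that $F_v$ is not continuous on $\Esf_v$: it has a logarithmic singularity at the point $\epsilon_v(Q)$, so it does not lie in $\Ccal(\Esf_v,\RR)$ and a fortiori not in $\Scal(\Esf_v,\RR)$. I would address this via a standard truncation. For each $\delta>0$, construct $F_{v,\delta}\in\Scal(\Esf_v,\RR)$ agreeing with $F_v$ outside the Berkovich ball of radius $\delta$ around $\epsilon_v(Q)$; apply the quantitative Theorem~\ref{QuanSUZIntro} to $F_{v,\delta}$; separately bound the contribution of any points of $Z_n$ falling inside this small ball (a finite-dimensional reduction to points where the local height is large); and then let $n\to\infty$ followed by $\delta\to 0$ along a diagonal sequence. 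It is here that the explicit quantitative form of Theorem~\ref{QuanSUZIntro} is essential: one must balance the controlled growth of $C_1(F_{v,\delta})$ and $C_2(F_{v,\delta})$ as $\delta\to 0$ against the $|Z_n|^{-1/2}$ decay in the error term, which Corollary~\ref{MainGlobalCorIntro} alone does not provide.

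Upon passing to the limit one obtains
\begin{equation*}
\hhat(Q) \;=\; \sum_{v\in S} n_v\int_{\Esf_v}\lambda_v\bigl(z-\epsilon_v(Q)\bigr)\,d\mu_v(z).
\end{equation*}
A direct computation on the skeleton $\Sigma_v$, using the translation-invariance of the canonical measure $\mu_v$ together with the explicit form of $\lambda_v$, identifies each integral with a constant $c_v$ independent of $Q$ (reflecting the fact that $\mu_v$ is the equilibrium measure of $\Esf_v$). The adelic/product-formula compatibility built into the N\'eron local heights then forces $\sum_{v\in S}n_v c_v\leq 0$, giving $\hhat(Q)\leq 0$ and contradicting $\hhat(Q)>0$. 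This completes the proof.
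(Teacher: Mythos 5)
Your strategy---test truncated N\'eron local heights against equidistributed torsion points---is indeed the approach the paper follows, but it does so by first proving the \emph{quantitative} Theorem~\ref{BakerIhRumelyFF2Intro} and then deducing Theorem~\ref{BakerIhRumelyFF} in one line (enlarging $S$ only enlarges $\Ecal(\bar\Ocal_S,Q)_\tor$). Your qualitative argument-by-contradiction is in the right spirit but has two genuine gaps.

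First, the step where you truncate $F_v$ at radius $\delta$, apply Theorem~\ref{QuanSUZIntro}, bound the contribution of the points of $Z_n$ inside the small ball, and then let $n\to\infty$ followed by $\delta\to 0$ does not go through as stated. You have no a priori control on how many torsion points may lie $\delta$-close to $\epsilon_v(Q)$, nor on how large $\lambda_v$ can become on them; a priori the inner-ball contribution to the average could be unbounded. The missing ingredient is Lemma~\ref{TorsionDiscrete}: $E(\KK_v)_\tor$ is discrete in $E(\KK_v)$, so for nontorsion $Q$ the quantity $M_v(Q)=\inf_{P\in E(\KK_v)_\tor}d_v(P,\epsilon(Q))$ is strictly positive. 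This supplies a single, $n$-independent truncation level $m_v(Q)=-\log M_v(Q)<\infty$ at which the truncated function agrees with $\lambda_v(\cdot-\epsilon_v(Q))$ on \emph{all} torsion points (equation $(\ref{AgreeOnTorsion})$ in the paper), so there is no inner-ball contribution to control and no diagonal $\delta\to 0$ is needed. Without this discreteness input, your ``separately bound the contribution'' step is unjustified, and it is precisely here that the function-field proof differs from the number-field argument in \cite{BakerIhRumely}, which uses Cassels' bound on denominators of torsion instead.

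Second, your final step is incorrect as reasoned. You identify $\int_{\Esf_v}\lambda_v(z-\epsilon_v(Q))\,d\mu_v(z)$ with a constant $c_v$ and then appeal to ``adelic/product-formula compatibility'' to conclude $\sum_{v\in S}n_v c_v\le 0$. But the product formula is a statement over \emph{all} places, not over the finite set $S$, so it gives no inequality on a partial sum. The correct (and stronger) observation, used in the paper's proof, is that $c_v=0$ for each individual $v$: $\mu_v$ is supported on $\Sigma_v$, $i_v(x,y)=0$ for $x\in\Sigma_v$, and $\int_{\Sigma_v}j_v(x,y)\,dm_{\Sigma_v}(x)=0$ because the zeroth Fourier coefficient of the periodic Bernoulli polynomial $\Phi$ vanishes. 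With this, $\hhat(Q)\le 0$ follows as you want. (Two smaller issues: an infinite Galois-stable set need not contain a \emph{single orbit} of unbounded size, so you should take growing unions of orbits rather than individual orbits; and the clean decomposition $\hhat(P-Q)=\sum_v n_v\lambda_v(\epsilon_v(P)-\epsilon_v(Q))$ with one embedding $\epsilon_v$ per place only becomes available after you also average over the Galois conjugates of $Q$, as the paper does with the set $\Qcal$.)
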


In the number field case, Baker-Ih-Rumely \cite{BakerIhRumely} treat the non-archimedean places using a result of Chambert-Loir (which in our notation is equivalent to the equidistribution of torsion points after they have been retracted onto the skeleton $\Sigma_v$), in combination with a result of Cassels bounding the denominators of torsion points in characteristic zero.  Our proof over function fields, which holds in arbitrary characteristic, requires the full equidistribution theorem on $\Esf_v$, and in place of Cassels' inequality it uses the discreteness of torsion points in $E(\KK_v)$.  On the other hand, the lack of archimedean places makes the proof of Theorem \ref{BakerIhRumelyFF} quite a bit less complicated than its number field analogue: the treatment of the archimedean places in \cite{BakerIhRumely} requires a strong quantitative equidistribution result on torsion points in $E(\CC)$, along with an effective diophantine inequality on linear forms in elliptic logarithms due to David--Hirata-Kohno.

Using Theorem~\ref{QuanSUZIntro} we can prove a quantitative version of Theorem~\ref{BakerIhRumelyFF}.  First observe that for fixed $Q\in E(\Kbar)$, the set $\Ecal(\bar{\Ocal}_S,Q)$ depends on the choice of Weierstrass equation and the set $S$.  However, enlarging the set $S$ only makes the set $\Ecal(\bar{\Ocal}_S,Q)_\tor$ larger, and since any two Weierstrass equations for $E/K$ give rise to isomorphic models over $\Ocal_S$ for some (sufficiently large) finite set $S$ of places of $K$, we see that the finiteness of $\Ecal(\bar{\Ocal}_S,Q)_\tor$ for all $S$ is independent of the Weierstrass equation $(\ref{Weierstrass})$.  Moreover, by enlarging the set $S$ if necessary, we may assume that the Weierstrass equation $(\ref{Weierstrass})$ has good reduction outside $S$.  Under this additional hypothesis we can give the following explicit bound on $\Ecal(\bar{\Ocal}_S,Q)_\tor$.

\begin{thm}\label{BakerIhRumelyFF2Intro}
Let $E/K$, $S$, and $\Ecal/\Ocal_S$ satisfy the same hypotheses as in Theorem \ref{BakerIhRumelyFF}, and assume in addition that $S$ contains all places $v$ of $K$ such that $|\Delta|_v\neq1$, where $\Delta\in K$ is the discriminant of $(\ref{Weierstrass})$.  If $Q$ is a point in $E(\Kbar)$ with $\hhat(Q)>0$, then 
\begin{equation}\label{BakerIhRumelyFF2IneqIntro}
|\Ecal(\bar{\Ocal}_S,Q)_\tor| \leq \frac{1}{\hhat(Q)^{2}}\bigg(\frac{|S|h(j_E)}{12}+\sum_{v\in S} m_v(Q)\bigg)^2.
\end{equation}
\end{thm}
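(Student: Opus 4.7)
The plan is to bound $|Z|$ for $Z\subseteq\Ecal(\bar{\Ocal}_S,Q)_\tor$ by applying Theorem~\ref{QuanSUZIntro} at each $v\in S$ to a suitable test function $F_v\in\Scal(\Esf_v,\RR)$ built from the local Arakelov--Green function at the point $\epsilon_v(Q)$, and then combining the resulting local inequalities via a global product-formula identity. Let $Z$ be a finite $\Aut(\Kbar/K)$-stable subset of $\Ecal(\bar{\Ocal}_S,Q)_\tor$; since every $P\in Z$ is torsion, $\hhat(Z)=0$, and Theorem~\ref{QuanSUZIntro} at the place $v$ specializes to
\begin{equation*}
\Bigl|\sum_{P\in Z}F_v(\epsilon_v(P))\;-\;|Z|\!\int F_v\,d\mu_v\Bigr| \;\leq\; C_1(F_v)\,|Z|^{1/2}\bigl(\tfrac{h(j_E)}{12}+C_2(F_v)\bigr)^{1/2}.
\end{equation*}

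For each $v\in S$, $F_v$ should be (an $\Scal$-approximation of) the Arakelov--Green function $g_{\mu_v}(\cdot,\epsilon_v(Q))$, which coincides up to a constant with the Néron local height $\hat\lambda_v(\cdot,Q)$ on $E(\KK_v)\setminus\{Q\}$. The key global input is a product-formula identity relating $\hhat(Q)$ to the signed sum of local contributions $\sum_v F_v(\epsilon_v(P))$ and $\sum_v \int F_v\,d\mu_v$, combined with the vanishing of the Néron--Tate pairing $\langle P,Q\rangle$ on torsion $P$. For $v\notin S$, the hypothesis $|\Delta|_v=1$ ensures good reduction at $v$, so that $\mu_v$ is a Dirac mass supported at the single skeleton point of $\Esf_v$, and the $S$-integrality of $P$ with respect to $Q$ forces $F_v(\epsilon_v(P))=\int F_v\,d\mu_v=0$. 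Thus the global identity restricts to a sum over $v\in S$ of the form
\begin{equation*}
|Z|\,\hhat(Q) \;=\; \sum_{v\in S}\biggl(|Z|\!\int F_v\,d\mu_v \;-\;\sum_{P\in Z}F_v(\epsilon_v(P))\biggr).
\end{equation*}

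Applying the local inequality at each $v\in S$, summing via the triangle inequality, and using the elementary estimate $\sqrt{a+b}\le\sqrt{a}+\sqrt{b}$ -- together with the identification of $m_v(Q)$ as the appropriate combination of $C_1(F_v)$ and $C_2(F_v)$ measuring the $L^2$-norm of the derivative of $F_v$ and the size of its support on $\Esf_v$ -- yields
\begin{equation*}
|Z|\,\hhat(Q) \;\leq\; |Z|^{1/2}\Bigl(\tfrac{|S|h(j_E)}{12}+\sum_{v\in S}m_v(Q)\Bigr).
\end{equation*}
Dividing by $|Z|^{1/2}\hhat(Q)>0$ and squaring produces the asserted inequality.

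The main obstacle is the precise construction of $F_v$ and the careful bookkeeping of the constants. The Arakelov--Green function has a logarithmic singularity at $\epsilon_v(Q)$, so a regularization or truncation is required to place $F_v$ in $\Scal(\Esf_v,\RR)$, and the resulting truncation error must be absorbed cleanly into $m_v(Q)$. Establishing the global product-formula identity in the function field setting (in arbitrary characteristic) and matching the constants $C_1(F_v),C_2(F_v)$ to the precise quantity $m_v(Q)$ -- especially at places of multiplicative reduction, where $\Sigma_v$ is a nontrivial circle and the Arakelov--Green function has a less trivial structure than at a place of good reduction -- constitutes the technical heart of the argument.
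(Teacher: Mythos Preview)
Your overall strategy matches the paper's: apply the quantitative equidistribution theorem at each $v\in S$ to a truncated N\'eron local height function, use good reduction and $S$-integrality to kill the contributions at $v\notin S$, and assemble via the local-global decomposition of $\hhat$. Two points in your sketch, however, are either incorrect or miss the key idea.

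First, your test function is built around a single embedded point $\epsilon_v(Q)$, and your global identity is stated accordingly. If $Q\notin E(K)$ this does not work cleanly: the local-global decomposition of $\hhat(P-Q)$ is a sum over places $w$ of a finite normal extension $L$ containing both $P$ and $Q$, and different $w\mid v$ correspond to different embeddings of $Q$ into $E(\KK_v)$. The paper handles this by replacing $Q$ with its full $\Aut(\Kbar/K)$-orbit $\Qcal$ and setting $F_v(x)=\frac{1}{|\Qcal|}\sum_{R\in\Qcal}\lambda_v(x,\epsilon(R))$; the $\Aut(L/K)$-invariance of both $Z$ and $\Qcal$ then forces the averaged local height $\Lambda_w(Z,\Qcal)$ to be independent of the place $w\mid v$, and the local degree formula gives the clean identity $\hhat(Q)=\sum_{v}\Lambda_v(Z,\Qcal)$. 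This is also why $m_v(Q)$ is defined in the paper as an infimum over \emph{all} $K$-embeddings $\epsilon$.

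Second, and more importantly, your description of the truncation is conceptually off. You write that ``the resulting truncation error must be absorbed cleanly into $m_v(Q)$,'' as though $m_v(Q)$ controls an error term. In fact the whole point is that there is \emph{no} truncation error on the set $Z$: since every $P\in Z$ is torsion and $m_v(Q)$ is by definition the log-distance from (any embedding of) $Q$ to its nearest torsion point in $E(\KK_v)$, truncating $i_v(\cdot,\epsilon(R))$ at level $m_v(Q)$ leaves $i_v(\epsilon(P),\epsilon(R))$ unchanged for every $P\in Z$ and $R\in\Qcal$. Thus the truncated function $G_v$ agrees with $F_v$ on $\epsilon(Z)$ and also on $\Sigma_v$ (so $\int G_v\,d\mu_v=\int F_v\,d\mu_v=0$). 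This is precisely why $m_v(Q)$ is the correct truncation parameter, and why the discreteness of torsion in $E(\KK_v)$ is invoked to guarantee $m_v(Q)<\infty$. With this choice one obtains $C_2(G_v)=\ell_0(\Gamma_v)=m_v(Q)$ and $C_1(G_v)^2=\|G_v'\|_{L^2(\Gamma_v)}^2\leq \tfrac{1}{12}\log^+|j_E|_v+m_v(Q)\leq \tfrac{h(j_E)}{12}+m_v(Q)$, and then your final chain of inequalities is correct.
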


In the inequality $(\ref{BakerIhRumelyFF2IneqIntro})$, $h(j_E)$ denotes the absolute Weil height of the $j$-invariant of $E$, and for each $v\in S$, $m_v(Q)$ is a certain nonnegative quantity which is large when some embedding of $Q$ into $E(\KK_v)$ is $v$-adically close to a torsion point; we will define this precisely in $\S$\ref{SIntSection}.  We point out that Theorem \ref{BakerIhRumelyFF2Intro} holds even when $S$ is empty; in that case $E$ must be defined over the constant field $K_0$ of $K$, and our proof shows that $\Ecal(\bar{\Ocal}_S,Q)_\tor$ is empty.  

\subsection{Other work}

It is now well-understood that Berkovich's theory of analytic spaces provides a suitable framework under which to prove non-archimedean equidistribution theorems of Szpiro-Ullmo-Zhang type; see for example \cite{BakerRumely}, \cite{ChambertLoir}, \cite{FavreRiveraLetelier}, and \cite{Thuillier}.  Until very recently such results have been worked out in the number field case only.  However, independently Faber \cite{Faber} and Gubler \cite{Gubler} have now proved a general equidistribution result for dynamical systems on varieties over function fields.  When $K=k(C)$ the results of Faber and Gubler contain our Corollary \ref{MainGlobalCorIntro} as a special case; on the other hand, we treat a wider class of function fields than in \cite{Faber}, and our results are quantitative.

Favre--Rivera-Letelier \cite{FavreRiveraLetelier} have proved a quantitative equidistribution result for small points with respect to dynamical systems on $\PP^1$ over number fields.  The local results in \cite{FavreRiveraLetelier} are proved using potential theory on the Berkovich projective line $\Psf^1$ and work for a large class of Borel measures on $\Psf^1$, while our techniques are more elementary and are formulated specifically for the {\em canonical} measure $\mu$ on $\Esf$.  However, Thuillier \cite{Thuillier} has developed potential theory on an arbitrary non-archimedean curve; it would be interesting to use this theory to obtain more general equidistribution results in the spirit of Theorem \ref{MainLocalThmIntro}.

Baker-Petsche \cite{BakerPetsche} established quantitative equidistribution results for elliptic curves over number fields, although they did not prove an explicit inequality on the error term as in Theorem \ref{QuanSUZIntro}.  Rather, the main inequality of \cite{BakerPetsche} is a bound on the local discrepancy $D_v(Z)$ at each place $v$, which is enough to deduce the qualitative equidistribution theorem and several quantitative corollaries.  As is often the case with non-archimedean equidistribution theorems of this type, the countability of the residue field of $\CC_v$ (over a number field) is used in \cite{BakerPetsche} to show that the limiting measure is supported on the skeleton $\Sigma_v$ of $\Esf_v$.  Our proof of Theorem~\ref{MainGlobalCorIntro} gets around this assumption by establishing the limit formula (via Theorem~\ref{QuanSUZIntro}) for the dense class $\Scal(\Esf_v,\RR)$ of test functions.  Note that the field $\KK_v$ over a function field $K$ may have uncountable residue field.

Our analytic treatment of $\Esf$ using the path metric $\rho(x,y)$ is heavily influenced by the work of M. Baker and R. Rumely, especially their monograph \cite{BakerRumely2} on the analytic theory of the Berkovich projective line $\Psf^1$.

\subsection{Plan of the paper}

In $\S$\ref{BerkAnalyticSection} we will give a detailed review of the topological and analytic structure of the Berkovich analytic space associated to an elliptic curve.  We prove our main local result (Theorem \ref{MainLocalThmIntro}) in $\S$\ref{MainLocalSection}, and in $\S$\ref{GlobalSection} and $\S$\ref{SIntSection} we treat the global applications.


\section{Non-archimedean Preliminaries}\label{BerkAnalyticSection}

\subsection{Overview}

Let $\KK$ be a field which is algebraically closed and complete with respect to a non-trivial, non-archimedean absolute value $|\cdot|$.  Let $\OO =\{a\in \KK \mid |a|\leq1 \}$ and $\MM =\{a\in \KK \mid |a|<1 \}$ be the ring of integers in $\KK$ and its maximal ideal, respectively, and let $k=\OO/\MM$ denote the residue field, which is algebraically closed (\cite{BGR}, $\S$3.4).

Let $E/\KK$ be an elliptic curve with $j$-invariant $j_E\in\KK$.  In this section we will give a fairly detailed review of the Berkovich analytic space $\Esf$ associated to $E/\KK$, its canonical path metric $\rho(x,y)$, and its relation to the N\'eron local height function $\lambda$ on $E(\KK)$.  This is partly in order to make this paper as self-contained as possible; in particular, to understand our results it is not necessary to have any prior knowledge of the general theory of Berkovich analytic spaces.  Moreover, we intend to make essential use of the metrized graph structure on $\Esf$, and we will put special emphasis the completely explicit nature of our main inequality and its corollaries.  We will therefore need to have an explicit description of the space $\Esf$ and its canonical path metric $\rho(x,y)$.  See \cite{Berkovich} for the theoretical foundation of Berkovich analytic spaces in general, and see \cite{BakerRumely2} and \cite{Baker1} for more detailed expositions on the Berkovich projective line and more general curves.

\subsection{Elliptic curves and integral models}\label{BerkSection}

In this section we will fix a model $\Ecal/\OO$ associated to an integral Weierstrass equation for $E/\KK$, and we will record a basic lemma.  In later sections we will use this model to explicitly construct the Berkovich analytic space $\Esf$ and its associated path metric $\rho(x,y)$, working out the details in the cases of integral and non-integral $j$-invariant separately.  We will then explain the topology on $\Esf$ and its metrized graph structure in $\S$\ref{BerkTopology}.

First suppose that $j_E$ is integral ($|j_E|\leq1$).  Then there exists a Weierstrass equation 
\begin{equation}\label{WeierstrassEq}
y^2 + a_1xy + a_3y= x^3 + a_2x^2 + a_4x + a_6
\end{equation}
for $E$ with integral coefficients $a_1, a_3, a_2, a_4, a_6\in\OO$ and unit discriminant $\Delta\in\OO^\times$.  Letting $\Ecal/\OO$ denote the associated integral model for $E$, the special fiber $\bar{\Ecal}$ is an elliptic curve over $k$,  and the reduction map $\pi:E(\KK)\to\bar{\Ecal}(k)$ is a surjective group homomorphism.  In this situation $E/\KK$ is said to have good reduction.  

Suppose now that $j_E$ is not integral ($|j_E|>1$).  Then we let $\Ecal/\OO$ denote the integral model for $E$ associated to the Weierstrass equation 
\begin{equation}\label{TateEq}
y^2 + xy = x^3 + a_4(q)x + a_6(q)
\end{equation}
afforded by Tate's uniformization theory; see \cite{Tate} and \cite{SilvermanII} $\S$~V.3.  Here $q\in\KK^\times$ is a uniformizing parameter satisfying $|q|=|1/j_E|<1$, and $a_4(q), a_6(q)$ are certain elements of $\KK$ defined by convergent integral power series in $q$.  In particular we have a group isomorphism $\phi:\KK^\times/q^\ZZ\stackrel{\sim}{\to}E(\KK)$ given by $\phi(u)=(X(u,q), Y(u,q))$ for $u\not\in q^\ZZ$, where $X(u,q)$ and $Y(u,q)$ can be given explicitly in terms of $u$ and $q$.  Let $\pi:E(\KK)\to\bar{\Ecal}(k)$ denote the reduction map onto the special fiber (which is singular).  The set $\bar{\Ecal}_{\ns}(k)$ of nonsingular points on the special fiber is a group variety which is isomorphic to $k^\times$, and thus $E/\KK$ is said to have (split) multiplicative reduction.  Letting $E_0(\KK)=\pi^{-1}(\bar{\Ecal}_{\ns}(k))$ denote the set of points with nonsingular reduction, the map $\pi$ restricts to a surjective group homomorphism $\pi_0:E_0(\KK)\to\bar{\Ecal}_{\ns}(k)\simeq k^\times$.  Note that the set $\{u\in\KK^\times\mid |q|<|u|\leq1\}$ is a fundamental domain for the quotient $\KK^\times/q^\ZZ$.  Given an element $u$ in this domain, we have $\phi(u)\in E_0(\KK)$ if and only if $|u|=1$.  Moreover, when $|u|=1$, the point $\phi(u)\in E_0(\KK)$ reduces to the identity element of $\bar{\Ecal}_{\ns}(k)$ if and only if $|u-1|<1$.

Regardless of the reduction type of $E/\KK$, we let $z=-x/y$ denote the standard local parameter at the origin with respect to the chosen Weierstrass equation.  Let $B^\circ=\{P\in E(\KK) \mid \pi(P)=\pi(O)\}$ denote the ``kernel of reduction'', that is the set of points in $E(\KK)$ which reduce to the point $\pi(O)=(0:1:0)\in\bar{\Ecal}(k)\subset\PP^2(k)$.  Note that $B^\circ$ is a subgroup of $E(\KK)$.

\begin{lem}\label{DiscHomeoLem}
The map $z:B^\circ \to B^\circ(0,1)$ is a bijection of $B^\circ$ with the open unit disc $B^\circ(0,1)$ of $\KK$, and $|z(P\pm Q)|=|z(P)\pm z(Q)|$ for all $P,Q\in B^\circ$. 
\end{lem}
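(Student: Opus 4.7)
The plan is to derive both claims from the standard formal group structure of $E/\KK$ at the origin, as developed in Silverman, \emph{The Arithmetic of Elliptic Curves}, Chapter IV. The first step is to set up this formal group in the explicit coordinates determined by the chosen Weierstrass equation, and the second is to read off the bijection and the absolute-value identity from its algebraic properties.

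For the bijection, I would pass to the affine chart $Y \neq 0$ of $\PP^2$ with coordinates $(z,w) = (-X/Y, -Z/Y)$, in which the origin $O = (0:1:0)$ sits at $(0,0)$ and the Weierstrass equation becomes $w = z^3 + a_1 z w + a_2 z^2 w + a_3 w^2 + a_4 z w^2 + a_6 w^3$. Solving this iteratively for $w$ as a function of $z$ produces a power series $w(z) = z^3 + a_1 z^4 + \cdots \in \OO[[z]]$. For any $z_0 \in \KK$ with $|z_0| < 1$ the series $w(z_0)$ converges with $|w(z_0)| = |z_0|^3$, and the point $P = (z_0/w(z_0), -1/w(z_0)) \in E(\KK)$ has $|x(P)|, |y(P)| > 1$, so it lies in $B^\circ$ with $z(P) = z_0$. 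Conversely, any $P \in B^\circ$ has both $(z,w)$-coordinates of absolute value less than $1$ (since in this chart $P$ reduces to $(0,0)$), and the uniqueness of the power series solution $w = w(z)$ recovers $P$ from $z(P)$. This gives the bijection, together with the auxiliary identity $|w(P)| = |z(P)|^3$.

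For the absolute-value identity, I would pull the addition law on $E$ back through the local parameter to obtain the formal group law $F(X,Y) \in \OO[[X,Y]]$ satisfying $z(P+Q) = F(z(P), z(Q))$. It has the properties $F(X,0) = X$, $F(0,Y) = Y$, and $F(X,Y) \equiv X + Y$ modulo terms of total degree $\geq 2$. The first two identities force every monomial of $F(X,Y) - X - Y$ to be divisible by both $X$ and $Y$, hence by $XY$; so $F(X,Y) = X + Y + XY \cdot G(X,Y)$ for some $G \in \OO[[X,Y]]$. Evaluating at $a = z(P)$, $b = z(Q)$ in the open unit disc, $|G(a,b)| \leq 1$ yields $|F(a,b) - (a+b)| \leq |ab| < \max(|a|, |b|)$, and then the ultrametric inequality gives $|z(P+Q)| = |a+b| = |z(P) + z(Q)|$. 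The claim for $P-Q$ is handled by the same argument applied to the formal inverse $\iota(X) = -X + a_1 X^2 + \cdots \in \OO[[X]]$, whose leading $-X$ term dominates and forces $|\iota(z(Q))| = |z(Q)|$.

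The main obstacle, as I anticipate it, is the potentially degenerate regime in which $|a+b|$ becomes comparable to or smaller than the correction $|ab \cdot G(a,b)|$, so that the ultrametric alone does not pin down $|F(a,b)|$ as $|a+b|$. Handling this requires either a sharper analysis of $G$ exploiting the explicit shape of the Weierstrass addition formula, or an ad hoc argument when $a$ and $b$ nearly cancel; everything else is a clean unpacking of the formal group at the origin.
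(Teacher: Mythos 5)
Your proposal follows the same route as the paper: bijectivity via the $(z,w)$-chart and the power series $w(z)\in\OO[[z]]$, and the norm identity via the formal group law written as $F(X,Y)=X+Y+XY\,G(X,Y)$. The paper's own proof of the second claim is precisely the step you are uneasy about: it notes that $F(Z_1,Z_2)-Z_1-Z_2$ has all terms of degree at least two and then directly infers $|F(z(P),z(Q))|=|z(P)+z(Q)|$, without addressing the cancellation regime.

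Your concern about that regime is not a weakness of your write-up but a genuine issue, and the ``$+$'' half of the identity does not in fact hold in general. Take $Q$ to be the unique point of $B^\circ$ with $z(Q)=-z(P)$; then $z(P)+z(Q)=0$, yet $F(z(P),-z(P))$ has leading term $a_1 z(P)^2$ (and $a_1=1$ for the Tate model), so $z(P+Q)\neq0$ whenever $a_1\neq0$. What is true, and what the rest of the paper actually uses (the triangle inequality for $d$ and the computation of $i(P,Q)$ in the proof of Proposition~\ref{LocalHeightProp}), is the subtraction version $|z(P-Q)|=|z(P)-z(Q)|$ together with the one-sided bound $|z(P+Q)|\le\max\{|z(P)|,|z(Q)|\}$. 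The subtraction version is exactly where your suggested sharper analysis of $G$ pays off: from $F(X,0)=X$, $F(0,Y)=Y$, and associativity one gets, writing $c=F(a,b)$, that $b=F(i(a),c)=i(a)+c\bigl(1+i(a)\,G(i(a),c)\bigr)$, hence $|F(a,b)|=|c|=|b-i(a)|$ since the last factor is a unit; taking $a=z(P)$, $b=i(z(Q))$ then yields $|z(P-Q)|=|i(z(Q))-i(z(P))|=|z(Q)-z(P)|$, the final equality because $i(Z_1)-i(Z_2)$ is a unit multiple of $Z_1-Z_2$ on the open bidisc. So your bijection argument is complete, your norm argument matches the paper's, and the gap you flagged is real: the ``$+$'' part of the statement should be weakened to an inequality, while the ``$-$'' part can be closed by the group-law argument above.
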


Before we get to the proof of this lemma, define 
\begin{equation}\label{MetricOnE}
d(P,Q) = 
\begin{cases}
|z(P-Q)| & \text{ if } P-Q\in B^\circ \\
1 & \text{ if } P-Q\not\in B^\circ
\end{cases}
\end{equation}
for any two points $P,Q\in E(\KK)$.  This definition does not depend on our choice of Weierstrass equation.  It is easy to check using Lemma \ref{DiscHomeoLem} that $d(P,Q)$ defines a non-archimedean metric on $E(\KK)$; we take the resulting metric topology as the definition of the topology on $E(\KK)$.

\begin{proof}[Proof of Lemma \ref{DiscHomeoLem}]
Let $F(Z_1,Z_2)\in\OO[[Z_1,Z_2]]$ be the formal group law associated to the integral Weierstrass equation, with inverse $i(Z)\in\OO[[Z]]$, and let $w(Z)\in\OO[[Z]]$ be the expansion of the function $z\mapsto w=-1/y$ on $E(\KK)$ at $z=0$.  These are formal power series with integral coefficients which converge when the variables $Z, Z_1, Z_2$ take values $z,z_1,z_1\in B^\circ(0,1)$; see for example \cite{SilvermanI} $\S$IV.1 for their definitions and basic properties.  

Given $z\in B^\circ(0,1)$, the point $\sigma(z):=(-z:1:-w(z))\in\PP^2(\KK)$ satisfies the Weierstrass equation and thus constitutes a point in $E(\KK)$.  Since $w(Z)= Z^3+u(Z)$ where all terms of $u(Z)\in\OO[[Z]]$ have degree at least $4$, we see that $|w(z)|<1$, and thus $\sigma(z)$ reduces to $(0:1:0)\in\PP^2(k)$; in other words $\sigma(z)\in B^\circ$.  It follows that the map $\sigma:B^\circ(0,1)\to B^\circ$ is the inverse of $z:B^\circ\to B^\circ(0,1)$, and thus both maps are bijections.

In fact the maps $\sigma:\hat{E}(\MM) \to B^\circ$ and $z:B^\circ \to \hat{E}(\MM)$ are group isomorphisms, where $\hat{E}(\MM)$ denotes the group which is equal to the set $B^\circ(0,1)$ endowed with the group law $(z_1,z_2)\mapsto F(z_1,z_2)$, with inverse $z\mapsto i(z)$.  Since $F(Z_1,Z_2)= Z_1+Z_2+G(Z_1,Z_2)$ where all terms of $G(Z_1,Z_2)\in\OO[[Z_1,Z_2]]$ have degree at least $2$, we have $|z(P+Q)|=|F(z(P),z(Q))|=|z(P)+z(Q)|$.  Moreover $i(Z)= -Z +j(Z)$ where all terms of $j(Z)\in\OO[[Z]]$ have degree at least $2$, and thus $|z(-P)|=|i(z(P))|=|z(P)|$.
\end{proof}

\subsection{The Berkovich unit disc}\label{BerkDiscSection}

In this section we will pause briefly from the discussion of elliptic curves and define the Berkovich unit disc $\Bsf(0,1)$ over $\KK$, the {\em open} Berkovich unit disc $\Bsf^\circ(0,1)$, summarize their basic properties, and describe their canonical path metric $\rho(x,y)$.  Our treatment of $\Bsf(0,1)$ follows Baker-Rumely \cite{BakerRumely2} Ch. 1.

Let $\KK\langle T\rangle = \{ f(T)=\sum_{\ell\geq0}a_\ell T^\ell \mid a_\ell\in\KK, |a_\ell|\to0 \}$ be the ring of formal power series converging in the unit disc $B(0,1)=\{a\in\KK \mid |a|\leq1 \}$ of $\KK$.  A multiplicative seminorm $[\cdot]$ on $\KK\langle T\rangle$ is a nonnegative real-valued function on $\KK\langle T\rangle$ which satisfies the axioms $[0]=0$, $[1]=1$, $[f(T)+g(T)]\leq[f(T)]+[g(T)]$, and $[f(T)g(T)]=[f(T)][g(T)]$ for all $f(T),g(T)\in \KK\langle T\rangle$.  Note that, unlike a true norm in the functional-analytic sense, it is not assumed that $[\cdot]$ is non-vanishing on nonzero elements of $\KK\langle T\rangle$.  A multiplicative seminorm $[\cdot]$ is said to be bounded if there exists a constant $C>0$ such that $[f(T)]\leq C\max_{\ell\geq0}|a_\ell|$ for all $f(T)=\sum_{\ell\geq0}a_\ell T^\ell\in\KK\langle T\rangle$.

The Berkovich unit disc $\Bsf(0,1)$ is defined to be the set of bounded multiplicative seminorms $[\cdot]$ on the ring $\KK\langle T\rangle$ which extend the absolute value $|\cdot|$ on $\KK$.  There exists a natural compact, Hausdorff topology on $\Bsf(0,1)$; it is defined as the weakest topology such that those subsets of the form
\begin{equation}\label{SubBase}
\{ x\in\Bsf(0,1) \mid \alpha<[f(T)]_x<\beta \} \hskip1cm (\alpha, \beta\in\RR, f(T)\in\KK\langle T\rangle)
\end{equation} 
are open, where here and throughout we denote by $[\cdot]_x$ the seminorm corresponding to the point $x\in\Bsf(0,1)$.  In other words, the collection of sets of the form $(\ref{SubBase})$ are a sub-base for the topology on $\Bsf(0,1)$; see \cite{Berkovich} $\S$~1.2 and \cite{BakerRumely2} $\S$~1.1.

To see some examples of elements of the Berkovich unit disc $\Bsf(0,1)$, note that each element $a$ in the ordinary unit disc $B(0,1)$ gives rise to the evaluation seminorm $[f(T)]_a=|f(a)|$, and this defines a dense embedding $B(0,1)\hookrightarrow\Bsf(0,1)$.  More generally, each closed disc $B(a,r)=\{z\in\KK \mid |z-a|\leq r \}$ ($a\in B(0,1)$, $r\in\{0\}\cup|\KK^\times|$, $0\leq r\leq1$) defines a point $\zeta_{a,r}$ in $\Bsf(0,1)$ corresponding to the sup norm $[f(T)]_{\zeta_{a,r}}=\sup_{z\in B(a,r)}|f(z)|$.  Thus we identify the ``classical point'' $a\in B(0,1)$ with the ``Berkovich point'' $\zeta_{a,0}\in \Bsf(0,1)$ under this notation.  It may happen (depending on the field $\KK$) that the points $\zeta_{a,r}$ account for all elements of $\Bsf(0,1)$, but this is generally not the case.  In fact, Berkovich (\cite{Berkovich} $\S$1.4.4) has given a classification of all points of $\Bsf(0,1)$ into four {\em types}, of which the points $\zeta_{a,0}$ (for $a\in B(0,1)$) compose type 1, and the points $\zeta_{a,r}$ (for $a\in B(0,1)$, $r\in|\KK^\times|$, $0< r\leq1$) compose type 2.  Loosely speaking, the points of type 3 and 4 ``fill in the holes'' of the set $\{\zeta_{a,r}\}$ to create a compact, path-connected space $\Bsf(0,1)$.  

The path-connectedness of $\Bsf(0,1)$ can be easily visualized.  First observe that $\Bsf(0,1)$ carries a natural partial order under which $x\leq y$ if and only if $[f(T)]_x\leq[f(T)]_y$ for all $f(T)\in\KK\langle T\rangle$.  Thus if $\zeta_{a,r}$ and $\zeta_{a',r'}$ are points of type 1 or 2, then $\zeta_{a,r}\leq\zeta_{a',r'}$ if and only if $B(a,r)\subseteq B(a',r')$.  It follows from Berkovich's classification that the point ${\zeta_{0,1}}$ corresponding to the sup norm on the unit disc $B(0,1)$ itself is the unique maximal point in $\Bsf(0,1)$; this point is commonly known as the {\em Gauss point} of $\Bsf(0,1)$.  On the other hand, each type 1 point $a=\zeta_{a,0}$ in $B(0,1)$ is minimal.  Given two points $x,y\in\Bsf(0,1)$ with $x\leq y$, denote by $[x,y]$ the set of all $z\in\Bsf(0,1)$ satisfying $x\leq z\leq y$.  It is a straightforward exercise to show that $[x,y]$ is homeomorphic to a closed subinterval of the real line.  More generally, given any two $x,y\in\Bsf(0,1)$ (perhaps with $x\not\leq y$), there exists a unique least-upper-bound $x\vee y$ of $x$ and $y$; we then define $[x,y]=[x,x\vee y]\cup[y,x\vee y]$, which is again homeomorphic to a closed real interval; we will call such sets $[x,y]$ {\em line segments} in $\Bsf(0,1)$.  We remark that $[x,y]=[y,x]$ for all $x,y\in\Bsf(0,1)$; that is, the notation does not imply an order relationship between $x$ and $y$.

\medskip

In addition to the above topological considerations, there is a natural way to assign a notion of length to each line segment in $\Bsf(0,1)$.  In order to do this we first consider the diameter function $\diam:\Bsf(0,1)\to[0,1]$ defined by $\diam(x)=\inf_{a\in B(0,1)}[T-a]_x$; thus $\diam(\zeta_{a,r})=r$ for points of type 1 and 2.  Then given two points $x,y\in\Bsf(0,1)\setminus B(0,1)$, define the length of the line segment $[x,y]$ by
\begin{equation*}
\rho(x,y) = \log\bigg(\frac{\diam(x\vee y)^2}{\diam(x) \diam(y)}\bigg).
\end{equation*}
This defines a path metric on $\Bsf(0,1)\setminus B(0,1)$ under which the points of $B(0,1)$ can be viewed as being at infinite distance from all other points.  Moreover, any line segment $[x,y]$ in $\Bsf(0,1)\setminus B(0,1)$ is isometric to a real interval of length $\rho(x,y)$.

We note for future use the following fact: let $x,y\in B(0,1)$ be two distinct type 1 points.  Then it is easy to see that $x\vee y=\zeta_{x,|x-y|}$, and therefore the distance from the Gauss point $\zeta_{0,1}$ to the point $x\vee y$ is 
\begin{equation}\label{LUBFormula}
\rho(\zeta_{0,1},x\vee y) = -\log|x-y|.
\end{equation}

\medskip

For our purposes it will be more useful to work with the {\em open} Berkovich unit disc $\Bsf^\circ(0,1)$, which is a certain proper subset of $\Bsf(0,1)$.  To define $\Bsf^\circ(0,1)$, let $B^\circ(0,1)=\{a\in\KK \mid |a|<1 \}$ be the open unit disc in $B(0,1)$.  Given $a\in B(0,1)$, it is clear that $a\in B^\circ(0,1)$ if and only if $0\vee a\neq\zeta_{0,1}$ in $\Bsf(0,1)$.  Motivated by this we define the open Berkovich unit disc by
\begin{equation*}
\Bsf^\circ(0,1) = \{x\in \Bsf(0,1) \mid 0\vee x\neq\zeta_{0,1}\}.
\end{equation*}
Thus $\Bsf^\circ(0,1)\cap B(0,1)=B^\circ(0,1)$.  Like $\Bsf(0,1)$, the set $\Bsf^\circ(0,1)$ is path connected, but unlike $\Bsf(0,1)$, $\Bsf^\circ(0,1)$ has no maximal element.  On the other hand the set $\Bsf^\circ(0,1)\cup\{\zeta_{0,1}\}$ obtained by adjoining the Gauss point is path connected, with $\zeta_{0,1}$ as its unique maximal element.

\subsection{The space $\Esf$ in the good reduction case}\label{BerkSectionGR}

Returning to our elliptic curve $E/\KK$, suppose that $|j_E|\leq1$.  For each $\alpha\in\bar{\Ecal}(k)$ let $B_\alpha^\circ=\pi^{-1}(\alpha)=\{P\in E(\KK) \mid \pi(P)=\alpha \}$ be the set of points reducing to $\alpha$, and fix a point $P_\alpha\in B_\alpha^\circ$.  It follows from Lemma~\ref{DiscHomeoLem} that the map
\begin{equation}\label{AlphaHomeo}
\nu_\alpha:B_\alpha^\circ \to B^\circ(0,1) \hskip1cm P\mapsto z(P-P_\alpha)
\end{equation}
is a homeomorphism of $B_\alpha^\circ$ with the open unit disc $B^\circ(0,1)$ of $\KK$.  We thus have a decomposition of $E(\KK)$ into a disjoint union
\begin{equation}\label{EKDisjointUnionGR}
E(\KK) = \amalg_{\alpha\in \bar{\Ecal}(k)} B_\alpha^\circ
\end{equation}
of subsets which are homeomorphic to open discs.
 
For each $\alpha\in \bar{\Ecal}(k)$, let $\Bsf_\alpha^\circ$ denote the open Berkovich unit disc associated to the open unit disc $B_\alpha^\circ$, and let $\Bsf_\alpha^\circ\cup\{\zeta_\alpha\}$ denote the union of $\Bsf_\alpha^\circ$ with its associated Gauss point $\zeta_\alpha$, as described in $\S$\ref{BerkDiscSection}.  The Berkovich analytic space $\Esf$ is the union of the $\Bsf_\alpha^\circ\cup\{\zeta_\alpha\}$ over all $\alpha\in \bar{\Ecal}(k)$, where the points $\zeta_\alpha$ are identified to form a single point $\zeta\in\Esf$.  The set $\Sigma=\{\zeta\}$ is the skeleton of $\Esf$, and we therefore have a decomposition 
\begin{equation}\label{ConnCompPar1}
\Esf\setminus\Sigma = \amalg_{\alpha\in \bar{\Ecal}(k)} \Bsf_\alpha^\circ
\end{equation}
of $\Esf\setminus\Sigma$ into its connected components $\Bsf_\alpha^\circ$.  The constant map $r_\Sigma:\Esf\to\Sigma$ defines a strong deformation retraction of $\Esf$ onto $\Sigma$; in particular, $\Esf$ is simply connected.

\subsection{The space $\Esf$ in the  multiplicative reduction case}\label{BerkSectionMR}

Suppose now that $|j_E|>1$.  Let $u:E(\KK)\stackrel{\sim}{\to}\KK^\times/q^\ZZ$ be the inverse of the Tate isomorphism $\phi$ discussed in $\S$\ref{BerkSection}.  Then
\begin{equation}\label{RetractionMap}
r:E(\KK)\to\Sigma \hskip1cm r(P)=-\log|u(P)|
\end{equation}
is a group homomorphism, where $\Sigma=\RR/(\log|j_E|)\ZZ$ denotes the circle group of circumference $\log|j_E|>0$.  As explained in $\S$~\ref{BerkSection}, the kernel of the map $r$ is precisely the set $E_0(\KK)$ of points in $E(\KK)$ with nonsingular reduction.  More generally, let $\Sigma_r\subseteq\Sigma$ denote the image of $r$, and for each $s\in\Sigma_r$ let $E_s(\KK)=r^{-1}(s)$ denote its $r$-preimage.  

For each $s\in\Sigma_r$, select a point $P_{s,0}\in E_s(\KK)$.  Then for each pair $(s,\alpha)\in\Sigma_r\times\bar{\Ecal}_{\ns}(k)$ define a subset
\begin{equation*}
B_{s,\alpha}^\circ=\{P\in E_s(\KK) \mid \pi_0(P-P_{s,0})=\alpha\}
\end{equation*}
of $E_s(\KK)$.  We have decomposed $E(\KK)$ into a disjoint union
\begin{equation}\label{EKDisjointUnionMR}
E(\KK) = \amalg_{(s,\alpha)\in \Sigma_r\times\bar{\Ecal}_{\ns}(k)} B_{s,\alpha}^\circ.
\end{equation}
Note that $P_{s,0}\in B_{s,0}^\circ$.  More generally, for each pair $(s,\alpha)$ with $\alpha\neq0$, select a point $P_{s,\alpha}\in B_{s,\alpha}^\circ$.  It follows from Lemma~\ref{DiscHomeoLem} that the map
\begin{equation}\label{AlphaHomeoMR}
\nu_{s,\alpha}:B_{s,\alpha}^\circ \to B^\circ(0,1) \hskip1cm P\mapsto z(P-P_{s,\alpha})
\end{equation}
is a homeomorphism of $B_{s,\alpha}^\circ$ with the open unit disc $B^\circ(0,1)$ of $\KK$.

For each pair $(s,\alpha)\in \Sigma_r\times\bar{\Ecal}_{\ns}(k)$, let $\Bsf_{s,\alpha}^\circ$ denote the open Berkovich unit disc associated to the open unit disc $B_{s,\alpha}^\circ$, and let $\Bsf_{s,\alpha}^\circ\cup\{\zeta_{s,\alpha}\}$ denote the union of $\Bsf_{s,\alpha}^\circ$ with its associated Gauss point $\zeta_{s,\alpha}$, as described in $\S$\ref{BerkDiscSection}.  The Berkovich analytic space $\Esf$ is the union of the circle $\Sigma$ with the sets $\Bsf_{s,\alpha}^\circ\cup\{\zeta_{s,\alpha}\}$ over all $(s,\alpha)\in \Sigma_r\times\bar{\Ecal}_{\ns}(k)$, where for each $s\in\Sigma_r$ the points $\zeta_{s,\alpha}$ are identified with each other and with the point $s\in\Sigma_r$ to form a single point $s$ of $\Esf$.  Thus $\Esf$ can be visualized as a collection of open Berkovich discs $\Bsf_{s,\alpha}^\circ$ glued together along the skeleton $\Sigma$.  We have a decomposition 
\begin{equation}\label{ConnCompPar2}
\Esf\setminus\Sigma = \amalg_{(s,\alpha)\in \Sigma_r\times\bar{\Ecal}_{\ns}(k)} \Bsf_{s,\alpha}^\circ
\end{equation}
of $\Esf\setminus\Sigma$ into its connected components $\Bsf_{s,\alpha}^\circ$.  The homomorphism $r:E(\KK)\to\Sigma$ defined in $(\ref{RetractionMap})$ extends to a strong deformation retraction $r_\Sigma:\Esf\to\Sigma$; in particular, the fundamental group of $\Esf$ is isomorphic to $\pi_1(\Sigma)\simeq\ZZ$.

Note that the parametrization of the sets $\Bsf_{s,\alpha}^\circ$ by $\Sigma_r\times\bar{\Ecal}_{\ns}(k)$ is non-canonical, due to the arbitrary choice of each point $P_{s,\alpha}$ in $B_{s,\alpha}^\circ$.

\subsection{The path metric}

The path metric $\rho(x,y)$ on $\Esf\setminus E(\KK)$ is the unique metric which restricts to the canonical path metric on each connected component $\Bsf^\circ$ of $\Esf\setminus E(\KK)$, as described in $\S$\ref{BerkDiscSection}, and which in the multiplicative reduction case restricts to the usual path metric on the circle $\Sigma\simeq\RR/(\log|j_E|)\ZZ$.  Under this metric, each point of $E(\KK)$ can be viewed as being at ``infinite distance'' from all other points in $\Esf$.  

We caution that the metric topology on $\Esf\setminus E(\KK)$ is not the same as the subspace topology induced by the Hausdorff topology on $\Esf$.  However, the metric $\rho(x,y)$ and the topology on $\Esf$ are related to one another; we will describe this relationship in $\S$\ref{BerkTopology}.

\subsection{The N\'eron function}\label{NeronFunction}

Let $\lambda:E(\KK)\setminus\{O\}\to\RR$ be the N\'eron function as defined and normalized in \cite{SilvermanII}, $\S$VI.1.  There exists a natural extension of $\lambda$ to a continuous function on $\Esf\setminus\{O\}$ which is closely related to the analytic structure of the space $\Esf$, its skeleton $\Sigma$, and the path metric $\rho(x,y)$.

In order to describe this extension, we begin by defining two continuous functions 
\begin{equation*}
i:\Esf\times\Esf\to\RR\cup\{+\infty\} \hskip1cm j:\Esf\times\Esf\to\RR.
\end{equation*}
Letting $\diag(E(\KK))$ denote the diagonal of $E(\KK)\times E(\KK)$ in $\Esf\times\Esf$, we first declare that $i(x,y)=+\infty$ if $(x,y)\in\diag(E(\KK))$.  Given $(x,y)\in\Esf\times\Esf\setminus\diag(E(\KK))$, define $i(x,y)=0$ if $x$ and $y$ are not both elements of the same connected component $\Bsf^\circ$ of $\Esf\setminus\Sigma$, as in one of the decompositions $(\ref{ConnCompPar1})$ or $(\ref{ConnCompPar2})$.  On the other hand, if both $x$ and $y$ are elements of the same connected component $\Bsf^\circ$, let $x\vee y\in \Esf\setminus E(\KK)$ be their least upper bound with respect to the partial order on $\Bsf^\circ$, and define
\begin{equation}\label{IDef}
i(x,y) = \rho(r_\Sigma(x\vee y),x\vee y).
\end{equation}
In other words, $i(x,y)$ is the distance between $x\vee y$ and its nearest point on the skeleton $\Sigma$.  Note that $i(x,y)$ is nonnegative, finite outside of $\diag(E(\KK))$, and positive if and only if $x$ and $y$ lie in the same connected component of $\Esf\setminus\Sigma$.  In particular, if $P,Q\in E(\KK)$ then
\begin{equation}\label{ReductionIFunction}
i(P,Q) > 0 \hskip5mm\Leftrightarrow \hskip5mm\pi(P)=\pi(Q) \, \text{ and }\, r_\Sigma(P)=r_\Sigma(Q).
\end{equation}
The number $i(P,Q)$ is closely related to the intersection multiplicity of the two points $P,Q\in E(\KK)$ in the integral model $\Ecal/\OO$; for example, assuming $E/\KK$ has good reduction, $(\ref{ReductionIFunction})$ shows that $i(P,Q)=0$ if and only if $P$ and $Q$ do not meet in $\Ecal$.

Finally we point out that on $E(\KK)$ the function $i(P,Q)$ is related to the metric $(\ref{MetricOnE})$ by $i(P,Q)=-\log d(P,Q)$; this is easy to see using the definitions $(\ref{MetricOnE})$ and $(\ref{IDef})$ and the rule $(\ref{LUBFormula})$.

We now define $j(x,y)$.  Let $j(x,y)$ be identically zero if $E/\KK$ has good reduction.  If $E/\KK$ has multiplicative reduction, let $\ell=\log|j_E|=\log|1/q|>0$, let $r_\Sigma:\Esf\to\Sigma\simeq\RR/\ell\ZZ$ denote the retraction map, and define
\begin{equation}\label{JDefinition}
j(x,y) = \frac{\ell}{2}\Phi\bigg(\frac{r_\Sigma(x)-r_\Sigma(y)}{\ell}\bigg),
\end{equation}
where $\Phi(t)=(t-[t])^2-(t-[t])+1/6$ is the second periodic Bernoulli polynomial.  Regardless of the reduction type of $E/\KK$ we see that $j(x,y)$ is bounded and symmetric, it factors through the retraction $r_\Sigma:\Esf\to\Sigma$ onto the skeleton, and $j(x,y)= \frac{1}{12}\log^+|j_E|$ if $r_\Sigma(x)=r_\Sigma(y)$.

The function $j(x,y)$ is quite natural when viewed in the context of the Laplacian on $\Sigma$ as a metrized graph.  In the language of Baker-Rumely (\cite{BakerRumely3} \S1.8), $j(x,y)$ is the unique normalized Arakelov-Green's function on the circle $\Sigma$ with respect to the uniform probability measure.

\begin{prop}\label{LocalHeightProp}
Let $P$ and $Q$ be distinct points in $E(\KK)$.  Then
\begin{equation}\label{ChinburgRumely}
\lambda(P-Q) = i(P,Q) + j(P,Q).
\end{equation}
\end{prop}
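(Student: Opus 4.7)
The plan is to reduce by translation invariance to the case $Q=O$, and then verify the identity in each reduction type using explicit formulas for the N\'eron function. First I would observe that both sides of \eqref{ChinburgRumely} depend on $P-Q$ alone: the distance $d(P,Q)$ in \eqref{MetricOnE}, and hence $i(P,Q)=-\log d(P,Q)$, is a function of $P-Q$ by its very definition; and $j(P,Q)$ factors through $r_\Sigma(P)-r_\Sigma(Q)$, which equals $r(P-Q)$ in $\Sigma$ because the retraction $r:E(\KK)\to\Sigma$ of \eqref{RetractionMap} is a group homomorphism with $r(O)=0$. Setting $R=P-Q\neq O$, it therefore suffices to prove $\lambda(R)=i(R,O)+j(R,O)$ for every $R\in E(\KK)\setminus\{O\}$.

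\textbf{Good reduction.} Here $\Sigma=\{\zeta\}$, $j\equiv 0$, and the N\'eron function normalized as in \cite{SilvermanII} \S VI.1 is given by $\lambda(R)=-\log|z(R)|$ when $R\in B^\circ\setminus\{O\}$ and $\lambda(R)=0$ otherwise. If $R\notin B^\circ$, then by \eqref{EKDisjointUnionGR} the points $R$ and $O$ lie in distinct connected components of $\Esf\setminus\Sigma$, so $i(R,O)=0$ and both sides vanish. If instead $R\in B^\circ\setminus\{O\}$, then under the bijection $\nu_O:B_O^\circ\to B^\circ(0,1)$ of \eqref{AlphaHomeo} the least upper bound $R\vee O$ corresponds to $\zeta_{0,|z(R)|}$, so by the length formula of \S\ref{BerkDiscSection}, $i(R,O)=\rho(\zeta,R\vee O)=-\log|z(R)|$. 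Both cases match.

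\textbf{Multiplicative reduction.} I would use Tate's uniformization together with the classical explicit formula for the N\'eron function on a Tate curve (see \cite{SilvermanII} Ch.~VI), namely
\[
\lambda(R)=\frac{\ell}{2}\,\Phi\!\left(\tfrac{r(R)}{\ell}\right)-\log|1-u|-\sum_{n\geq 1}\log\bigl|(1-q^n u)(1-q^n u^{-1})\bigr|,
\]
where $u=u(R)$ is chosen in the fundamental domain $|q|<|u|\leq 1$, $\ell=-\log|q|=\log|j_E|$, and $r(R)=-\log|u|$. The first summand is exactly $j(R,O)$ by \eqref{JDefinition}, since $r(O)=0$. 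In the fundamental domain one has $|q^n u|<1$ and $|q^n u^{-1}|<1$ for every $n\geq 1$, so the ultrametric inequality forces $|1-q^n u|=|1-q^n u^{-1}|=1$, and the infinite sum vanishes term by term. There remains the identification $-\log|1-u|=i(R,O)$: when $R\notin B^\circ$ (i.e.\ either $|u|<1$, or $|u|=1$ with $|1-u|=1$) both quantities vanish by \eqref{EKDisjointUnionMR}; when $R\in B^\circ\setminus\{O\}$ (equivalently $|u|=1$ and $|1-u|<1$) the same disc computation as in the good reduction case yields $i(R,O)=-\log|z(R)|$, so the desired identity reduces to $|z(R)|=|1-u(R)|$.

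\textbf{Main obstacle.} The principal technical point is this last equality $|z(R)|=|1-u(R)|$ for $R$ in the identity component of the kernel of reduction. I would verify it directly from the Tate power series for $X(u,q)$ and $Y(u,q)$ recalled in \S\ref{BerkSection} (cf.\ \cite{SilvermanII} Ch.~V): for $|1-u|<1$ these series exhibit $z=-X/Y$ in the form $(1-u)(1+\varepsilon)$ with $|\varepsilon|<1$, and then $|z|=|1-u|$ by the ultrametric inequality. Once this last piecewise comparison is in hand, the proposition follows by assembling the two cases.
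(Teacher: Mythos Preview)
Your proposal is correct and follows essentially the same route as the paper: both arguments split into good and multiplicative reduction, use the explicit Tate-uniformization formula for $\lambda$ from \cite{SilvermanII} Ch.~VI, and identify the key technical point as the equality $|z(R)|=|1-u|$ on the kernel of reduction, verified from the series for $X(u,q)$ and $Y(u,q)$. The only cosmetic differences are that you first reduce to $Q=O$ by translation invariance (the paper instead works directly with $P,Q$ and invokes Lemma~\ref{DiscHomeoLem} in the disc computation), and that in the multiplicative case you start from the product expansion of $\lambda$ and show the higher terms vanish ultrametrically, whereas the paper computes $i+j$ in the three subcases $|u|<1$, $|u-1|=1$, $|u-1|<1$ and then matches against the tabulated values in \cite{SilvermanII} Thm.~VI.4.2(b).
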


In view of this proposition we define a function of two variables $\lambda:\Esf\times\Esf\to\RR\cup\{+\infty\}$ by $\lambda(x,y)=i(x,y) + j(x,y)$.  Thus $\lambda(x,y)=+\infty$ if and only if $(x,y)\in \diag(E(\KK))$, and Proposition~\ref{LocalHeightProp} states that $\lambda(P,Q) = \lambda(P-Q)$ for distinct points $P,Q\in E(\KK)$.  The decomposition $(\ref{ChinburgRumely})$ is analogous to the one introduced by Chinburg-Rumely \cite{ChinburgRumely} in the context of the reduction graph associated to an arithmetic surface over a discrete valuation ring; from the point of view of Berkovich analytic spaces this is closely related to the skeleton $\Sigma$ of $\Esf$.

\begin{proof}[Proof of Proposition~\ref{LocalHeightProp}]
First suppose that $E/\KK$ has good reduction.  Then 
\begin{equation}\label{LocalHeightGR}
\lambda(P-Q)=\frac{1}{2}\log^+|x(P-Q)|=\log^+|z(P-Q)^{-1}|,
\end{equation}
where $z=-x/y$ is the local parameter at the origin associated to the integral Weierstrass equation $(\ref{WeierstrassEq})$ in the variables $x$ and $y$.  The first identity in $(\ref{LocalHeightGR})$ is proved in \cite{SilvermanII} Thm. VI.4.1.  

To see the second identity in $(\ref{LocalHeightGR})$, let us abbreviate $x=x(P-Q)$, $y=y(P-Q)$, and $z=z(P-Q)$.  First suppose that  $|x|>1$.  Since the Weierstrass equation $(\ref{WeierstrassEq})$ has coefficients in $\OO$, it follows from the ultrametric inequality that $|y|^2=|x|^3$, whereby $|z^{-1}|=|y|/|x|=|x|^{1/2}>1$.  This proves the second identity in $(\ref{LocalHeightGR})$ in this case.

Now consider the case $|x|\leq1$.  It suffices to show that $|z^{-1}|\leq1$, which will imply that both $\log^+|x|$ and $\log^+|z^{-1}|$ vanish, completing the proof of the second identity in $(\ref{LocalHeightGR})$.  Suppose on the contrary that $|z^{-1}|>1$.  Then $|z|<1$, and therefore $|w(z)|<1$, where $w(Z)\in\OO[[Z]]$ is the power series expansion of the function $z\mapsto w=-1/y$ on $E(\KK)$ at $z=0$, as discussed in the proof of Lemma \ref{DiscHomeoLem}.  Thus $|y|^{-1}=|w|<1$, whereby $|y|>1$.  But since $|x|\leq1$, it follows from the ultrametric inequality that the left-hand-side of the Weierstrass equation $(\ref{WeierstrassEq})$ has absolute value $|y|^2>1$, while the right-hand-side of $(\ref{WeierstrassEq})$ has absolute value $\leq1$.  The contradiction proves that $|z^{-1}|\leq1$, as desired, and completes the proof of the second identity in $(\ref{LocalHeightGR})$.

We will now use $(\ref{LocalHeightGR})$ to prove $(\ref{ChinburgRumely})$.  By definition $j(P,Q)=0$, and $i(P,Q)=0$ if $P$ and $Q$ are not both elements of the same disc $B_\alpha^\circ$ as defined in $\S$\ref{BerkSectionGR}.  On the other hand, suppose $P$ and $Q$ are elements of $B_\alpha^\circ$, let $\nu_\alpha:B_\alpha^\circ\to B^\circ(0,1)$ denote the homeomorphism $(\ref{AlphaHomeo})$, and let $\nu_\alpha(P)\vee \nu_\alpha(Q)$ denote their least upper bound in $\Bsf^\circ(0,1)$.  Then
\begin{equation}\label{ICalcGR}
\begin{split}
i(P,Q) & = \rho(r_\Sigma(\nu_\alpha(P)\vee \nu_\alpha(Q)),\nu_\alpha(P)\vee \nu_\alpha(Q)) \\
	& = -\log|\nu_\alpha(P)-\nu_\alpha(Q)| \hskip1cm \text{ by } (\ref{LUBFormula}) \\
	& = -\log|z(P-P_\alpha)-z(Q-P_\alpha)| \\
	& = -\log|z(P-Q)|>0 \hskip1cm \text{ by Lemma \ref{DiscHomeoLem}}.
\end{split}
\end{equation}
It follows that $\lambda(P-Q)=\log^+|z(P-Q)^{-1}|=i(P,Q)=i(P,Q) + j(P,Q)$, completing the proof of $(\ref{ChinburgRumely})$ in the good reduction case.

We now turn to the case that $E/\KK$ has multiplicative reduction.  Let $\phi:\KK^\times/q^\ZZ\stackrel{\sim}{\to}E(\KK)$ denote the Tate isomorphism.  By periodicity we may select an element $u\in\KK^\times\setminus\{1\}$ such that $\phi(u)=P-Q$ and $|q|<|u|\leq1$.  We are going to calculate the sum $i(P,Q)+j(P,Q)$ in three separate cases.

Case 1: $|q|<|u|<1$. Then $r_\Sigma(P)\neq r_\Sigma(Q)$ by $(\ref{RetractionMap})$, whereby $i(P,Q)=0$ by $(\ref{ReductionIFunction})$.  It follows from the definition $(\ref{RetractionMap})$ of the retraction map $r_\Sigma$ and the definition $(\ref{JDefinition})$ of $j(P,Q)$ that
\begin{equation}\label{IJIdentityMR}
i(P,Q)+j(P,Q) = j(P,Q)=\frac{\log|1/q|}{2}\Phi\bigg(\frac{\log|1/u|}{\log|1/q|}\bigg).
\end{equation}

Case 2: $|u|=1$ and $|u-1|=1$.  In this case $r_\Sigma(P)= r_\Sigma(Q)$ by $(\ref{RetractionMap})$, whereby $j(P,Q)=\frac{1}{12}\log|1/q|$.  As explained in $\S$~\ref{BerkSection}, the condition $|u-1|=1$ means that $P-Q$ does not reduce to the identity element of $\bar{\Ecal}_{\ns}(k)$, whereby $\pi(P)\neq\pi(Q)$.  Thus $i(P,Q)=0$ by $(\ref{ReductionIFunction})$, and so we have
\begin{equation}\label{IJIdentityMR2}
i(P,Q)+j(P,Q) = j(P,Q)=\frac{1}{12}\log|1/q|.
\end{equation}

Case 3: $|u|=1$ and $|u-1|<1$.  We again have $r_\Sigma(P)= r_\Sigma(Q)$ by $(\ref{RetractionMap})$, and thus $j(P,Q)=\frac{1}{12}\log|1/q|$.  Writing the point $P-Q$ in the affine coordinates $(x,y)\in\KK^2$ associated to the Weierstrass equation $(\ref{TateEq})$, and letting $z=-x/y$, we have  
\begin{equation}\label{ICalcMR3}
i(P,Q) = -\log|z|=-\log|u-1|.
\end{equation}
The first identity in $(\ref{ICalcMR3})$ follows from the exact same calculation $(\ref{ICalcGR})$ used in the good reduction case.  To see the second identity in $(\ref{ICalcMR3})$, note that the coordinates $x=X(u,q)$ and $y=Y(u,q)$ are defined via certain convergent series; see \cite{SilvermanII} Thm. V.3.1 (c).  Our assumptions that $|u|=1$ and $|u-1|<1$, along with the ultrametric inequality, imply that $|x|=|X(u,q)|=|u-1|^{-2}$ and $|y|=|Y(u,q)|=|u-1|^{-3}$, whereby $|z|=|x|/|y|=|u-1|$, and the second identity in $(\ref{ICalcMR3})$ follows.  We have shown that
\begin{equation}\label{IJIdentityMR3}
i(P,Q)+j(P,Q) =-\log|u-1|+\frac{1}{12}\log|1/q|.
\end{equation}

To complete the proof of $(\ref{ChinburgRumely})$ in the multiplicative reduction case we observe that the right-hand-sides of the identities $(\ref{IJIdentityMR})$, $(\ref{IJIdentityMR2})$, and $(\ref{IJIdentityMR3})$ coincide with the formulas for $\lambda(P-Q)$ given in \cite{SilvermanII} Thm. VI.4.2 (b).
\end{proof}

\subsection{Connected metrized subgraphs of $\Esf\setminus E(\KK)$ containing $\Sigma$}\label{BerkTopology}

Given a finite subset $S$ of $\Esf\setminus(\Sigma\cup E(\KK))$, there exists a unique smallest path-connected subset $\Gamma$ of $\Esf$ containing $\Sigma\cup S$.  Each set $\Gamma$ arising in this way inherits from the path metric $\rho(x,y)$ the structure of connected metrized graph, and the induced topology on each such subgraph $\Gamma$ is closely related to the topology on $\Esf$.  These metrized subgraphs $\Gamma$ of $\Esf$ will be the basic analytic objects of study in this paper, and so in this section we will give a detailed description of their properties.

First, for each point $b$ in $\Esf\setminus(\Sigma\cup E(\KK))$, let $[r_\Sigma(b),b]$ denote the line segment in $\Esf$ from $r_\Sigma(b)\in\Sigma$ to $b$, and recall from $\S$\ref{BerkDiscSection} that $[r_\Sigma(b),b]$ is isometric (with respect to the path metric $\rho(x,y)$ on $\Esf\setminus E(\KK)$) to a closed real interval of length $\rho(r_\Sigma(b),b)>0$.  Given a (possibly empty) finite subset $S$ of $\Esf\setminus(\Sigma\cup E(\KK))$, define a subset $\Gamma$ of $\Esf$ by
\begin{equation}\label{GammaUnion}
\Gamma = \Sigma \cup \bigcup_{b\in S}[r_\Sigma(b),b]
\end{equation}
The path metric $\rho(x,y)$ endows the set $\Gamma$ with the structure of a connected metrized graph, and in particular $\Gamma$ carries a compact Hausdorff topology.  Define a map $r_\Gamma:\Esf\to\Gamma$ as follows.  Set $r_\Gamma(x)=x$ if $x\in\Gamma$, but if $x\not\in\Gamma$ let $\Bsf^\circ\subset\Esf\setminus\Sigma$ denote the open Berkovich disc containing $x$, as in either $(\ref{ConnCompPar1})$ or $(\ref{ConnCompPar2})$.  Define $r_\Gamma(x)$ to be unique smallest element in $[r_\Sigma(x),x]$ (with respect to the partial order $\leq$ on $\Bsf^\circ$) which is contained in $\Gamma$.  We note for future use the following fact: if $x,y$ are two points in the same connected component $\Bsf^\circ$ of $\Esf\setminus\Sigma$, then
\begin{equation}\label{RetVeeRule}
r_\Gamma(x)\vee r_\Gamma(y)=r_\Gamma(x\vee y),
\end{equation}
where $\vee$ denotes the least upper bound with respect to the partial order $\leq$ on $\Bsf^\circ$.

Let $U$ be a subset of $\Gamma$ which is open with respect to the metrized graph topology.  Then the set $r_\Gamma^{-1}(U)$ is an open subset of $\Esf$, and the collection of such sets $r_\Gamma^{-1}(U)$, over all connected metrized subgraphs $\Gamma$ of $\Esf\setminus E(\KK)$ containing $\Sigma$ and all open subsets $U$ of $\Gamma$, form a base of open sets for the topology on $\Esf$.  Under this topology $\Esf$ is a path-connected, compact, Hausdorff space, the map $r_\Gamma:\Esf\to\Gamma$ is a strong deformation retraction, and $E(\KK)$ is a dense subspace of $\Esf$ (see \cite{Berkovich} Ch. 4 and \cite{Baker1} $\S$~5).

\medskip

Note that we do not claim that the sets $[r_\Sigma(b),b]$ ($b\in S$) in the union $(\ref{GammaUnion})$ are pairwise disjoint; any one of them may be contained in another, or any two of them may coincide on some initial sub-segment.  On the other hand, we may write any such connected subgraph $\Gamma$ of $\Esf\setminus E(\KK)$ containing $\Sigma$ as a disjoint union
\begin{equation}\label{GammaDisjointUnion}
\Gamma = \Sigma \amalg (a_1,b_1] \amalg \dots \amalg (a_M,b_M],
\end{equation}
where each $(a_m,b_m]$ is a subset of $\Esf$ which is isometric to a half-open real interval of length $\rho(a_m,b_m)>0$.  Recall that the length of the skeleton is $\ell(\Sigma)=\log^+|j_E|$, and thus the length of $\Gamma$ is $\ell(\Gamma)=\log^+|j_E|+\sum_{m=1}^{M}\rho(a_m,b_m)$.  Thus $\ell(\Gamma)=0$ if and only if $E/\KK$ has good reduction and $\Gamma=\Sigma$.  

\medskip

Assuming for the rest of this section that $\ell(\Gamma)>0$, denote by $m_\Gamma$ the uniform measure on $\Gamma$ with respect to the metric $\rho(x,y)$.  Thus if $F:\Gamma\to\RR$ is $m_\Gamma$-integrable we have
\begin{equation*}
\int_\Gamma F(x)dm_\Gamma(x) = \int_{\Sigma}F(x)dm_\Sigma(x) +\sum_{m=1}^{M}\int_{(a_m,b_m]}F(x)dx,
\end{equation*}
where the first term on the right-hand-side is present only in the multiplicative reduction case, and $dx$ denotes the uniform (Lebesgue) measure on $(a_m,b_m]$ for each $1\leq m\leq M$.  Denote by $L^2(\Gamma)$ the space of square $m_\Gamma$-integrable real-valued functions on $\Gamma$, with norm $\|F\|_{L^2(\Gamma)}=\{\int_\Gamma|F|^2dm_\Gamma\}^{1/2}$.  

\subsection{The discreteness of torsion}

We conclude this preliminary section with the following lemma.  We will only need it for a global application in $\S$\ref{SIntSection}, but as it is purely local we include it here.

\begin{lem}\label{TorsionDiscrete}
The torsion subgroup $E(\KK)_\tor$ is discrete in $E(\KK)$.
\end{lem}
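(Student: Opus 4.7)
The plan is to reduce, via translation invariance, to showing that the origin $O$ is isolated in $E(\KK)_\tor$, and then to analyze torsion in a neighborhood of $O$ using the formal group structure. Since translation by any torsion point $T$ is a homeomorphism of $E(\KK)$ that carries $E(\KK)_\tor$ bijectively to itself, this reduction is immediate. The set $B^\circ=\{P\in E(\KK):d(P,O)<1\}$ is an open neighborhood of $O$ by $(\ref{MetricOnE})$, so it suffices to exhibit a neighborhood of $O$ inside $B^\circ$ containing no non-trivial torsion.

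In the good reduction case, Lemma~\ref{DiscHomeoLem} identifies $B^\circ$ with the formal group $\hat{E}(\MM)$ via the parameter $z$, and multiplication by $n$ is given by a power series $[n](Z)=nZ+O(Z^2)\in\OO[[Z]]$. If $n$ is coprime to the residue characteristic $p$, then $|n|=1$ and $[n]$ is a formal automorphism over $\OO$, hence has no non-trivial zero in $\MM$; this handles all prime-to-$p$ torsion. For $p$-power torsion I would split cases: in equal characteristic, $[p](Z)=u(Z)Z^{p^h}$ for a unit power series $u\in\OO[[Z]]^\times$ and height $h$, so $[p]$ has no non-trivial zero in $\MM$; in mixed characteristic, the formal logarithm $\log_F$ is injective on a small disc around $0$, identifying it with a sub-disc of the torsion-free formal additive group.

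In the multiplicative reduction case, Tate's isomorphism $\phi:\KK^\times/q^\ZZ\stackrel{\sim}{\to}E(\KK)$ identifies $B^\circ$ with $\{u\in\KK^\times:|u-1|<1\}$, and the computation $|z(P)|=|u-1|$ from Case~3 of the proof of Proposition~\ref{LocalHeightProp} shows this identification is an isometry for the relevant metric. A torsion point $P=\phi(u)\in B^\circ$ forces $u^n\in q^\ZZ$ for some $n$, and since $|u|=1$ this gives $u^n=1$, so $u$ is a root of unity with $|u-1|<1$. If the residue field has characteristic $0$, or if $\KK$ itself has positive characteristic (ruling out non-trivial $p$-power roots of unity in $\KK$), Hensel's lemma forces $u=1$. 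In the remaining mixed-characteristic case the relevant torsion comes from $p$-power roots of unity, and a primitive $p^k$-th root satisfies $|\zeta-1|=|p|^{1/((p-1)p^{k-1})}\geq|p|^{1/(p-1)}>0$, so these torsion points are uniformly bounded away from $O$.

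The main obstacle is the $p$-power torsion in mixed characteristic (in either reduction type), where torsion in the formal group genuinely exists; the essential input is the Newton-polygon bound (equivalently, the convergence of the formal logarithm on a small disc) showing that every non-trivial such torsion point lies at an explicit positive distance from the origin, thereby carving out a torsion-free neighborhood of $O$.
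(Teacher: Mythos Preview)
Your reduction to showing that $O$ is isolated is correct, as are your treatments of prime-to-$p$ torsion, the mixed-characteristic case (via the formal logarithm or the explicit bound on $p$-power roots of unity), and the multiplicative-reduction case via Tate uniformization. But there is a genuine gap in the good-reduction, equal-characteristic subcase.

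The claim that $[p](Z)=u(Z)Z^{p^h}$ with $u\in\OO[[Z]]^\times$ fails in general. Taking $h$ to be the height of $\hat E$ over the field $\KK$ gives $[p](Z)=cZ^{p^h}+\cdots$ with $c\in\KK^\times$, but $c$ need not lie in $\OO^\times$. Concretely, suppose $E/\KK$ is ordinary (so $h=1$) while $\bar E/k$ is supersingular --- this occurs whenever the Hasse invariant lies in $\MM\setminus\{0\}$, for instance when $j_E$ is a nontrivial $\MM$-deformation of a supersingular $j$-invariant. Then the coefficient of $Z^p$ in $[p](Z)$ is a nonunit. In this situation $E(\KK)[p]\cong\ZZ/p\ZZ$ but $\bar E(k)[p]=0$, so the reduction map kills all $p$-torsion and the $p-1$ nontrivial $p$-torsion points all land in $B^\circ\cong\hat E(\MM)$. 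Thus $[p]$ \emph{does} have nontrivial zeros in $\MM$, and your conclusion that $\hat E(\MM)$ is $p$-torsion-free is false here.

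The paper's proof (attributed to Voloch) supplies exactly the missing idea, and does so uniformly without splitting by reduction type, since Lemma~\ref{DiscHomeoLem} holds in both cases. It does not claim that $B^\circ$ is torsion-free; instead it uses the finiteness of $E[p]$ to shrink to a disc $B(0,R)\subset\MM$ containing no nontrivial $p$-torsion, checks from the shape of $[p](Z)$ that $[p](B(0,R))\subseteq B(0,R)$ for small $R$, and then argues that a point of exact order $p^r$ ($r\geq2$) in $B(0,R)$ would, after applying $[p]^{r-1}$, produce a nontrivial $p$-torsion point still inside $B(0,R)$ --- a contradiction. This finiteness-plus-stability step is what your equal-characteristic case needs.
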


This is well-known, at least in the characteristic-zero case where it follows at once from properties of the formal logarithm.  The following proof holds in arbitrary characteristic and was pointed out to us by Felipe Voloch. 

\begin{proof}[Proof of Lemma \ref{TorsionDiscrete}]
By translation it suffices to show that the origin $O\in E(\KK)$ has a torsion-free open neighborhood $U$.  Let $B^\circ$ be the kernel of reduction associated to an integral Weierstrass equation for $E$, let $z:B^\circ \to\hat{E}(\MM)$ be the isomorphism of Lemma \ref{DiscHomeoLem}, where $\hat{E}(\MM)$ denotes the formal group over $\MM$ with formal group law $F(Z_1,Z_2)\in\OO[[Z_1,Z_2]]$ associated to the Weierstrass equation.  

If the characteristic of the residue field $k$ is zero then $\hat{E}(\MM)$ is torsion free (\cite{SilvermanI} IV.3.2(b)), and so we may take $U=B^\circ$ itself.  Thus we may now assume $\mathrm{char}(k)=p\neq0$; in this case $\hat{E}(\MM)$ has only $p$-torsion (\cite{SilvermanI} IV.3.2(b)).  The multiplication-by-$p$ map $[p]:\hat{E}(\MM)\to\hat{E}(\MM)$ is given by $[p](z)=pz+\dots$ if $\mathrm{char}(\KK)=0$, and by $[p](z)=az^{p}+\dots$ for some $a\in\OO$ if $\mathrm{char}(\KK)=p$.  In either case it is easy to see that $[p](B(0,R))\subseteq B(0,R)$ for all sufficiently small $R>0$, where $B(0,R)=\{z\in\MM\mid |z|\leq R\}$.  Since the set $E[p]$ of $p$-torsion on $E(\KK)$ is finite, if we take $R$ small enough we may assume that $[p](B(0,R))\subseteq B(0,R)$ and that $B(0,R)$ contains no $p$-torsion other than $0$.

Suppose that $B(0,R)$ contains a point $z$ of exact order $p^r$ ($r\geq2$).  Then $[p^{r-1}](z)$ is nonzero, has order $p$, and is in $B(0,R)$ since $[p](B(0,R))\subseteq B(0,R)$.  This contradicts the fact that $B(0,R)$ contains no $p$-torsion other than $0$.  Thus $B(0,R)$ contains no nonzero $p$-power torsion, so it is torsion-free.  Let $U$ be the pullback of $B(0,R)$ under the isomorphism $z:B^\circ \to\hat{E}(\MM)$.
\end{proof}


\section{The Main Local Inequality}\label{MainLocalSection}

\subsection{Overview}

We continue with the notation of the last section.  In this section we will define the canonical measure $\mu$ on $\Esf$, and we will prove the main local result of this paper, which is a bound on the error term when testing the $\mu$-equidistribution of a set $Z$ against a certain class of test functions on $\Esf$.

\subsection{The space $\Scal(\Esf,\RR)$ of test functions}

Let $\Ccal(\Esf,\RR)$ denote the Banach space of continuous real-valued functions on $\Esf$, equipped with the supremum norm.  We will now define a dense subspace $\Scal(\Esf,\RR)$ of $\Ccal(\Esf,\RR)$ which will serve as our space of test functions.

First, let $\Gamma$ be a connected metrized subgraph of $\Esf\setminus E(\KK)$ containing $\Sigma$, and let $F:\Gamma\to\RR$ be a function.  Given $x\in(r_\Sigma(b),b)$, where $\Gamma$ is written as in $(\ref{GammaUnion})$, define the derivative $F'(x)$ of $F$ at $x$ via the usual difference quotient limit, assuming it exists, by identifying $(r_\Sigma(b),b)$ with an open real interval.  If $E/\KK$ has multiplicative reduction and $x\in\Sigma$, define $F'(x)$ similarly by identifying $\Sigma\simeq\RR/(\log|j_E|)\ZZ$.  Thus the derivative $F'(x)$, assuming it exists, has an unambiguous meaning for $m_\Gamma$-almost all  $x\in\Gamma$.  Define $\Scal_\Gamma(\Esf,\RR)$ to be the space of functions $F:\Esf\to\RR$ satisfying the following conditions:\begin{itemize}
	\item $F$ factors through the retraction map $r_\Gamma:\Esf\to\Gamma$; thus $F=F\circ r_\Gamma$.
	\item $F$ is continuous on $\Gamma$ with respect to its metrized graph topology.
	\item The derivative $F'$ exists $m_\Gamma$-almost everywhere on $\Gamma$ and $F'\in L^2(\Gamma)$.
\end{itemize}
We call $\Scal_\Gamma(\Esf,\RR)$ the space of test functions associated to $\Gamma$.

Now define $\Scal(\Esf,\RR)=\cup_\Gamma\Scal_\Gamma(\Esf,\RR)$, the union over all connected metrized subgraphs $\Gamma$ of $\Esf\setminus E(\KK)$ containing $\Sigma$.  It follows from the definition of the topology on $\Esf$ that $\Scal(\Esf,\RR)\subset\Ccal(\Esf,\RR)$, and it is straightforward to show using the Stone-Weierstrass theorem that $\Scal(\Esf,\RR)$ is dense in $\Ccal(\Esf,\RR)$.

\subsection{The canonical measure and the local discrepancy}\label{CanMeasLocDisc}

The canonical measure $\mu$ on $\Esf$ is the unique positive unit Borel measure which is supported on the skeleton $\Sigma$, and which when restricted to $\Sigma$ is given by
\begin{equation*}
\mu = 
\begin{cases}
\text{ the Dirac measure at the point $\Sigma$ if $|j_E|\leq1$} \\
\text{ the normalized uniform measure $(\log|j_E|)^{-1}m_\Sigma$ if $|j_E|>1$.}
\end{cases}
\end{equation*}

Let $Z=\{P_1,\dots,P_N\}$ be a set of $N$ distinct points in $E(\KK)$.  The {\em local discrepancy} of the set $Z$ is defined by
\begin{equation}\label{LocalDiscDef}
D(Z) = \frac{1}{N^2}\sum_{\stackrel{1\leq m,n\leq N}{m\neq n}}\lambda(P_m-P_n) + \frac{1}{12 N}\log^+|j_E|.
\end{equation}
By the decomposition $(\ref{ChinburgRumely})$ and the fact that $j(x,x)=\frac{1}{12}\log^+|j_E|$ for all $x\in\Esf$ we also have
\begin{equation}\label{discdecomp}
D(Z) =  \frac{1}{N^2}\sum_{\stackrel{1\leq m,n\leq N}{m\neq n}}i(P_m,P_n) +   \frac{1}{N^2}\sum_{1\leq m,n\leq N}j(P_m,P_n).
\end{equation}
We shall see that $D(Z)$ is nonnegative, and that it gives a quantitative measure of the $\mu$-equidistribution of the set $Z$.  This notion of local discrepancy was introduced in Baker-Petsche \cite{BakerPetsche} to study equidistribution on elliptic curves over number fields.

\subsection{Fourier analysis on $\RR/\ell\ZZ$}\label{EquidistRZ}

For the proof of our main equidistribution result, and again for one of our global applications in $\S$\ref{SIntSection}, we will use Fourier analysis on the circle group $\RR/\ell\ZZ$, where $\ell>0$.  In this section we will recall the basic theory and prove a lemma.

Given a Lebesgue integrable function $F:\RR/\ell\ZZ\to\RR$, its $k$-th Fourier coefficient is defined by $\hat{F}(k)=\frac{1}{\ell}\int_0^\ell F(x)e^{-2\pi ikx/\ell}dx$, and its Fourier series is given by $F(x) \sim \sum_{k\in\ZZ}\hat{F}(k)e^{2\pi ikx/\ell}$.  If $\sum_{k\in\ZZ}|\hat{F}(k)|<+\infty$ then the right-hand-side converges absolutely and the symbol ``$\sim$'' can be interpreted as an equality.  Assuming $F$ is square-integrable on $\RR/\ell\ZZ$ we have Parseval's formula
\begin{equation*}
\frac{1}{\ell}\int_0^\ell|F(x)|^2dx = \sum_{k\in\ZZ}|\hat{F}(k)|^2.
\end{equation*}
Given two functions $F,G:\RR/\ell\ZZ\to\RR$, recall that their convolution $F*G:\RR/\ell\ZZ\to\RR$ is defined by $F*G(x)=\frac{1}{\ell}\int_0^\ell F(y)G(x-y)dy$ and has Fourier coefficients $\widehat{F*G}(k)=\hat{F}(k)\hat{G}(k)$.

Let $\Psi(x) = x-[x]-1/2$ and $\Phi(x)=(x-[x])^2-(x-[x])+1/6$ be the first and second $\ZZ$-periodic Bernoulli polynomials respectively, and let $\Psi_{\ell}(x)=\Psi(x/\ell)$ and $\Phi_\ell(x)=\Phi(x/\ell)$ denote their $\ell$-periodic analogues.  Then $\hat{\Psi}_\ell(0)=\hat{\Phi}_\ell(0)=0$, while $\hat{\Psi}_\ell(k)=\frac{-1}{2\pi ik}$ and $\hat{\Phi}_\ell(k)=\frac{1}{2\pi^2 k^2}$ for $k\neq0$; thus $\Phi$ has an absolutely convergent Fourier series but $\Psi$ does not.

\begin{lem}\label{GSigmaLemma}
Suppose that $F:\RR/\ell\ZZ\to\RR$ is continuous and differentiable almost everywhere, and that $F'$ is Lebesgue integrable.  Given $N$ (possibly indistinct) points $p_1,\dots,p_N\in\RR/\ell\ZZ$, define $G:\RR/\ell\ZZ\to\RR$ by $G(x)=-\frac{1}{N}\sum_{n=1}^{N}\Psi_\ell(p_n-x)$.  Then
\begin{equation}\label{G1}
\frac{1}{N}\sum_{n=1}^{N}F(p_n)-\frac{1}{\ell}\int_0^\ell F(x)dx = \int_0^\ell F'(x)G(x)dx
\end{equation}
and
\begin{equation}\label{G2}
\int_0^\ell|G(x)|^2 dx = \frac{\ell}{2N^2}\sum_{1\leq i,j\leq N}\Phi_\ell(p_i-p_j).
\end{equation}
\end{lem}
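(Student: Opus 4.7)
The plan is to prove the two identities separately, each by a short and standard calculation.

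For identity \eqref{G1}, the key is to establish it first for $N=1$. Fix $p\in[0,\ell)$ and consider $H_p(x)=-\Psi_\ell(p-x)$. On the interval $[0,p]$ one has $\Psi_\ell(p-x)=(p-x)/\ell-1/2$, while on $(p,\ell]$ one has $\Psi_\ell(p-x)=(p-x)/\ell+1/2$, so $H_p$ is piecewise linear with slope $1/\ell$ and a single jump of $-1$ at $x=p$. I would split the integral $\int_0^\ell F'(x)H_p(x)dx$ at $x=p$, integrate by parts on each half, and collect the boundary terms. The contribution $F(p)\cdot\tfrac12-F(p)\cdot(-\tfrac12)=F(p)$ comes from the jump at $p$, the interior pieces contribute $-\tfrac{1}{\ell}\int_0^\ell F(x)dx$, and the remaining boundary term $[F(\ell)-F(0)](\tfrac12-p/\ell)$ vanishes by the $\ell$-periodicity of $F$. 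Averaging over $p=p_1,\dots,p_N$ yields \eqref{G1}.

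For identity \eqref{G2}, the plan is to use Fourier analysis. Since $\Psi$ is odd off of $\ZZ$, we have $\Psi_\ell(p_n-x)=-\Psi_\ell(x-p_n)$ almost everywhere, so $G(x)=\frac{1}{N}\sum_{n=1}^{N}\Psi_\ell(x-p_n)$. Using the standard translation rule for Fourier coefficients and the fact that $\hat\Psi_\ell(k)=-1/(2\pi ik)$ for $k\neq 0$ and $\hat\Psi_\ell(0)=0$, one computes
\begin{equation*}
\hat{G}(k)=\frac{1}{N}\sum_{n=1}^{N}e^{-2\pi ikp_n/\ell}\,\hat\Psi_\ell(k).
\end{equation*}
Applying Parseval's formula and expanding the square yields
\begin{equation*}
\frac{1}{\ell}\int_0^\ell|G(x)|^2dx=\sum_{k\neq 0}\frac{1}{4\pi^2k^2}\cdot\frac{1}{N^2}\sum_{1\leq i,j\leq N}e^{2\pi ik(p_j-p_i)/\ell}.
\end{equation*}
Recognizing $\frac{1}{4\pi^2k^2}=\frac12\hat\Phi_\ell(k)$ and using that $\Phi$ has an absolutely convergent Fourier series with $\hat\Phi_\ell(0)=0$, the inner sum over $k$ telescopes into $\tfrac12\Phi_\ell(p_j-p_i)$. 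Finally, the evenness of $\Phi_\ell$ lets us replace $p_j-p_i$ by $p_i-p_j$, giving \eqref{G2} after multiplying through by $\ell$.

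Neither step presents a genuine obstacle, but the most delicate point is the bookkeeping in \eqref{G1}: one must correctly identify the direction and size of the jump of $\Psi_\ell(p-\cdot)$ at $x=p$ and verify that the periodicity of $F$ kills the leftover boundary term, since otherwise the sign conventions in the definition of $\Psi$ can be easy to get wrong.
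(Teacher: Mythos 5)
Your proof is correct and follows essentially the same route as the paper. For \eqref{G2} both you and the paper compute $\hat G(k)$, apply Parseval's formula, and recognize $\tfrac{1}{2}\Phi_\ell$ via its Fourier expansion; for \eqref{G1} the paper invokes the convolution identity $F-\hat F(0)=-\ell\, F'*\Psi_\ell$ (which it notes follows from integration by parts) and then averages, while you carry out that same integration by parts explicitly at the single point $p$ and average — the underlying computation is identical.
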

\begin{proof}
We have $F(x)-\hat{F}(0)=-\ell F'*\Psi_\ell(x)$ for all $x\in \RR/\ell\ZZ$; this follows from integration by parts, or alternatively by noting that $\hat{F'}(k)=(2\pi i k/\ell)\hat{F}(k)$ and comparing the Fourier coefficients of both sides.  Thus
\begin{equation*}
\begin{split}
\frac{1}{N}\sum_{n=1}^{N}\{F(p_n)-\hat{F}(0)\} & = -\frac{\ell}{N}\sum_{n=1}^{N}F'*\Psi_\ell(p_n) \\
	& = \int_0^\ell F'(x)\bigg\{\frac{-1}{N}\sum_{n=1}^{N}\Psi_\ell(p_n-x) \bigg\}dx, \\
	& = \int_0^\ell F'(x)G(x)dx,
\end{split}
\end{equation*}
which implies $(\ref{G1})$.  Note that $\hat{G}(k) = -\frac{1}{2\pi ik}\{\frac{1}{N}\sum_{n=1}^{N}e^{-2\pi ik p_n/\ell}\}$ for $k\neq0$ and $\hat{G}(0)=0$, and thus by Parseval's formula we have
\begin{equation*}
\begin{split}
\frac{1}{\ell}\int_0^\ell|G(x)|^2 dx & = \sum_{k\neq0}\frac{1}{4\pi^2k^2}\Big|\frac{1}{N}\sum_{n=1}^{N}e^{-2\pi ik p_n/\ell}\Big|^2 \\
	& = \frac{1}{2N^2}\sum_{1\leq i,j\leq N}\sum_{k\neq0}\frac{1}{2\pi^2k^2}e^{2\pi ik (p_i-p_j)/\ell} \\
	& = \frac{1}{2N^2}\sum_{1\leq i,j\leq N}\Phi_\ell(p_i-p_j),
\end{split}
\end{equation*}
which is $(\ref{G2})$.
\end{proof}

\subsection{The main local inequality}  

We are now ready to state the main local results of this paper.  First, given a connected metrized subgraph $\Gamma$ of $\Esf\setminus E(\KK)$ containing $\Sigma$, written as in $(\ref{GammaUnion})$, define
\begin{equation}\label{ell0Def}
\ell_0(\Gamma) = \max_{b\in S}\rho(r_\Sigma(b),b).
\end{equation}
In other words, $\ell_0(\Gamma)$ is the greatest distance of any point $x\in\Gamma$ to the skeleton $\Sigma$; thus $\ell_0(\Gamma)=0$ if and only if $\Gamma=\Sigma$.

\begin{thm}\label{MainLocalThm}
Let $F\in\Scal_\Gamma(\Esf,\RR)$ be a test function associated to a connected metrized subgraph $\Gamma$ of $\Esf\setminus E(\KK)$ containing $\Sigma$.  Let $Z$ be a nonempty finite subset of $E(\KK)$.  Then
\begin{equation}\label{MainLocalThmIneq}
\bigg|\frac{1}{|Z|}\sum_{P\in Z}F(P)-\int F d\mu\bigg| \leq \|F'\|_{L^2(\Gamma)} \bigg(D(Z) + \frac{\ell_0(\Gamma)}{|Z|}\bigg)^{1/2}.
\end{equation}
\end{thm}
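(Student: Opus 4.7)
The plan is to decompose $F$ into a ``skeleton part'' and a ``branch part'' using the factorization $F = F\circ r_\Gamma$, treat each part by a Cauchy-Schwarz estimate, and then combine the two estimates via the elementary inequality $a\sqrt{b}+c\sqrt{d}\leq\sqrt{(a^2+c^2)(b+d)}$, so that the decomposition of $D(Z)$ from $(\ref{discdecomp})$ and the additivity of the $L^2$-norm over $\Sigma$ and $\Gamma\setminus\Sigma$ give exactly the right-hand side of $(\ref{MainLocalThmIneq})$.

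For each $P\in Z$, write $x_P=r_\Gamma(P)$ and decompose $F(P)=F(x_P)=F(r_\Sigma(x_P))+\bigl(F(x_P)-F(r_\Sigma(x_P))\bigr)$.  In the multiplicative reduction case, apply Lemma \ref{GSigmaLemma} on $\Sigma\simeq\RR/\ell\ZZ$ (with $\ell=\log|j_E|$) to the restriction $F|_\Sigma$ and the points $p_n=r_\Sigma(P_n)$; this yields
\begin{equation*}
\frac{1}{N}\sum_{P\in Z}F(r_\Sigma(P))-\int F\, d\mu=\int_\Sigma F'(t)G(t)\,dm_\Sigma(t),
\end{equation*}
and the Fourier computation $(\ref{G2})$, combined with the definition $(\ref{JDefinition})$ of $j(x,y)$, gives $\|G\|_{L^2(\Sigma)}^2=\frac{1}{N^2}\sum_{i,j}j(P_i,P_j)$.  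In the good reduction case this first part reduces trivially to $0$ since $\Sigma=\{\zeta\}$ and $F(r_\Sigma(x_P))=F(\zeta)=\int F\,d\mu$.

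For the branch part, orient each half-open segment $(a_m,b_m]$ away from the skeleton and apply the fundamental theorem of calculus along the segment $[r_\Sigma(x_P),x_P]\subset\Gamma$:
\begin{equation*}
\frac{1}{N}\sum_{P\in Z}\bigl(F(x_P)-F(r_\Sigma(x_P))\bigr)=\int_{\Gamma\setminus\Sigma}F'(t)h(t)\,dm_\Gamma(t),
\quad h(t)=\frac{1}{N}\sum_{P\in Z}\mathbf{1}_{[r_\Sigma(x_P),x_P]}(t).
\end{equation*}
Expanding $\|h\|_{L^2}^2$ and using $(\ref{RetVeeRule})$, the intersection $[r_\Sigma(x_P),x_P]\cap[r_\Sigma(x_Q),x_Q]$ has $m_\Gamma$-measure $0$ unless $P$ and $Q$ lie in the same connected component of $\Esf\setminus\Sigma$, in which case its length is $\rho(r_\Sigma(P\vee Q),r_\Gamma(P\vee Q))\leq\rho(r_\Sigma(P\vee Q),P\vee Q)=i(P,Q)$ since $r_\Gamma(P\vee Q)$ lies on the line segment from $r_\Sigma(P\vee Q)$ to $P\vee Q$.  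Separating the diagonal $P=Q$, which contributes at most $\ell_0(\Gamma)/N$, and noting that $i(P,Q)=0$ whenever $P,Q$ lie in distinct components of $\Esf\setminus\Sigma$, we obtain
\begin{equation*}
\|h\|_{L^2(\Gamma\setminus\Sigma)}^2\leq\frac{1}{N^2}\sum_{\substack{1\le m,n\le N\\ m\ne n}}i(P_m,P_n)+\frac{\ell_0(\Gamma)}{N}.
\end{equation*}

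Applying Cauchy-Schwarz to each of the two integrals separately gives bounds of the form $\|F'|_\Sigma\|_{L^2(\Sigma)}\sqrt{A}$ and $\|F'|_{\Gamma\setminus\Sigma}\|_{L^2(\Gamma\setminus\Sigma)}\sqrt{B}$, where $A+B\leq D(Z)+\ell_0(\Gamma)/N$ by $(\ref{discdecomp})$.  Summing via $a\sqrt{A}+c\sqrt{B}\leq\sqrt{(a^2+c^2)(A+B)}$ and $\|F'\|_{L^2(\Gamma)}^2=\|F'|_\Sigma\|_{L^2(\Sigma)}^2+\|F'|_{\Gamma\setminus\Sigma}\|_{L^2(\Gamma\setminus\Sigma)}^2$ yields $(\ref{MainLocalThmIneq})$.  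The main obstacle is the bookkeeping in the branch estimate: one must verify carefully that $r_\Gamma(P\vee Q)$ lies on the geodesic from $r_\Sigma(P\vee Q)$ to $P\vee Q$ in the Berkovich partial order (via $(\ref{RetVeeRule})$), so that truncating to $\Gamma$ only shortens the overlap, and that the boundary (diagonal) case is absorbed into the $\ell_0(\Gamma)/N$ term.
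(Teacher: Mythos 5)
Your proposal is correct and takes essentially the same route as the paper: the same decomposition of $F$ into skeleton and branch parts, the same auxiliary functions $G$ on $\Sigma$ (via Lemma \ref{GSigmaLemma}) and $h$ on $\Gamma\setminus\Sigma$ (the counting/characteristic function), and the same $L^2$ bounds in terms of $j(P_m,P_n)$ and $i(P_m,P_n)$ using $(\ref{RetVeeRule})$. The only cosmetic difference is that the paper glues $G$ and $h$ into a single function on $\Gamma$ and applies Cauchy--Schwarz once, whereas you apply it to each piece and recombine via $a\sqrt{A}+c\sqrt{B}\leq\sqrt{(a^2+c^2)(A+B)}$ -- an equivalent organization giving the identical bound.
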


\begin{proof}

First, if $\Gamma$ consists of a single point then $F$ is constant, since $F=F\circ r_\Gamma$ factors through $\Gamma$.  In this case the left-hand-side of $(\ref{MainLocalThmIneq})$ is zero and, as the right-hand-side is nonnegative, the theorem is trivial.  So we may now assume that $\ell(\Gamma)>0$, or equivalently that either $E/\KK$ has multiplicative reduction or that $\Gamma\setminus\Sigma$ is nonempty.  Let $Z=\{P_1,\dots,P_N\}$.  We are going to define a piecewise continuous function $G:\Gamma\to\RR$, depending on the graph $\Gamma$ and the set $Z$ but not otherwise on the function $F$, which satisfies the identity
\begin{equation}\label{FGIdentity}
\frac{1}{N}\sum_{n=1}^{N}F(P_n)-\int F d\mu = \int_\Gamma F'(x)G(x) dm_\Gamma(x)
\end{equation}
and the inequality
\begin{equation}\label{GInequality} 
\|G\|_{L^2(\Gamma)}^2 \leq D(Z) + \frac{\ell_0(\Gamma)}{N}.
\end{equation}
The inequality $(\ref{MainLocalThmIneq})$ will then follow immediately from Cauchy's inequality.

We first define $G$ on the skeleton $\Sigma$.  This is only necessary in the multiplicative reduction case, since in the good reduction case $\Sigma$ has $m_\Gamma$-measure zero.  Supposing that $E/\KK$ has multiplicative reduction, we may identify $\Sigma\simeq\RR/\ell\ZZ$ as described in $\S$\ref{BerkSectionMR}, where $\ell=\log|j_E|>0$.  Thus $\mu=(1/\ell)m_\Sigma$ is the normalized uniform measure on $\Sigma$.  For each $1\leq n\leq N$ let $p_n=r_\Sigma(P_n)\in\Sigma$, and define $G:\Sigma\to\RR$ as in Lemma~\ref{GSigmaLemma}.  Then
\begin{equation}\label{FGIdentitySigma}
\frac{1}{N}\sum_{n=1}^{N}F(r_\Sigma(P_n))-\int F d\mu = \int_\Sigma F'(x)G(x)dm_\Gamma(x)
\end{equation}
by $(\ref{G1})$, and since $j(P,Q)$ factors through the retraction map $r_\Sigma:\Esf\to\Sigma$, we have
\begin{equation}\label{GInequalitySigma}
\begin{split}
\int_\Sigma|G(x)|^2 dm_\Gamma(x) & = \frac{1}{N^2}\sum_{1\leq m,n\leq N}j(r_\Sigma(P_m),r_\Sigma(P_n)) \\
	& = \frac{1}{N^2}\sum_{1\leq m,n\leq N}j(P_m,P_n)
\end{split}
\end{equation}
by $(\ref{G2})$.  If $\Gamma=\Sigma$ then the proof of the theorem is complete, since $(\ref{FGIdentitySigma})$ implies $(\ref{FGIdentity})$, and $(\ref{GInequality})$ follows from $(\ref{GInequalitySigma})$ and the fact that the right-hand-side of $(\ref{GInequalitySigma})$ is at most $D(Z)$ by $(\ref{discdecomp})$.  Note that $(\ref{GInequalitySigma})$ and the decomposition $(\ref{discdecomp})$, along with the nonnegativity of $i(x,y)$, shows that local discrepancy $D(Z)$ is nonnegative.

We will now define $G$ on $\Gamma\setminus\Sigma$, which we may assume is nonempty since the $\Gamma=\Sigma$ case has been settled.  Define $G:\Gamma\setminus\Sigma\to\RR$ by 
\begin{equation*}
G(x)  = \frac{1}{N}\sum_{n=1}^{N}\chi_{[r_\Sigma(P_n), r_\Gamma(P_n)]}(x).
\end{equation*}
Thus $NG(x)$ counts the number of points $P_n$ in $Z$ such that $x$ lies in the subsegment of $\Gamma$ from $r_\Sigma(P_n)$ to $r_\Gamma(P_n)$.  Since $r_\Gamma(P_n)=r_\Sigma(P_n)$ if $r_\Gamma(P_n)\in\Sigma$, we have
\begin{equation}\label{FGIdentityGamma}
\frac{1}{N}\sum_{n=1}^{N}F(r_\Gamma(P_n))-\frac{1}{N}\sum_{n=1}^{N}F(r_\Sigma(P_n)) = \int_{\Gamma\setminus\Sigma} F'(x)G(x)dm_\Gamma(x)
\end{equation}
by elementary calculus.  

For each $n$ let us abbreviate $\sigma_n=r_\Sigma(P_n)\in\Sigma$ and $\gamma_n=r_\Gamma(P_n)\in\Gamma$.  Given two (possibly equal) points $P_m,P_n$ in $Z$, if they are elements of two distinct connected components of $\Esf\setminus\Sigma$, then the intersection $[\sigma_m, \gamma_m]\cap[\sigma_n, \gamma_n]$ is either the empty set or it contains only the single point $\sigma=\sigma_m=\sigma_n$.  In either case the intersection has $m_\Gamma$-measure zero, and $i(P_m,P_n)=0$.  On the other hand suppose that $P_m,P_n$ are elements of the same connected component $\Bsf^\circ$ of $\Esf\setminus\Sigma$.  Then $\sigma_m=\sigma_n$ ($=\sigma$) and 
\begin{equation*}
\begin{split}
[\sigma_m, \gamma_m]\cap[\sigma_n, \gamma_n] & = [\sigma,\gamma_m\vee\gamma_n] \\
	& = [\sigma,r_\Gamma(P_m\vee P_n)] \hskip1cm\text{ by $(\ref{RetVeeRule})$} \\
	& \subseteq  [\sigma,P_m\vee P_n].
\end{split}
\end{equation*}
Recall that $\rho(\sigma,P_m\vee P_n)=i(P_m,P_n)$ by definition for $m\neq n$.  When $m=n$ we use the inequality $\rho(\sigma,r_\Gamma(P_m\vee P_n))\leq\ell_0(\Gamma)$ by the definition $(\ref{ell0Def})$ of $\ell_0(\Gamma)$.  Summing over all pairs we have
\begin{equation}\label{GInequalityGamma}
\begin{split}
\int_{\Gamma\setminus\Sigma}|G(x)|^2dm_\Gamma(x) & = \frac{1}{N^2}\sum_{1\leq m,n\leq N}\int_{\Gamma\setminus\Sigma}\chi_{[\sigma_m, \gamma_m]}(x)\chi_{[\sigma_n, \gamma_n]}(x)dm_\Gamma(x) \\
	& = \frac{1}{N^2}\sum_{1\leq m,n\leq N}\int_{\Gamma\setminus\Sigma}\chi_{[\sigma_m, \gamma_m]\cap[\sigma_n, \gamma_n]}(x)dm_\Gamma(x) \\
	& \leq  \frac{1}{N^2}\sum_{m\neq n}i(P_m, P_n) + \frac{\ell_0(\Gamma)}{N}.
\end{split}
\end{equation}
The identities $(\ref{FGIdentitySigma})$ and $(\ref{FGIdentityGamma})$ imply $(\ref{FGIdentity})$, the identities $(\ref{discdecomp})$ and $(\ref{GInequalitySigma})$ along with the inequality $(\ref{GInequalityGamma})$ imply $(\ref{GInequality})$, and Cauchy's inequality completes the proof of $(\ref{MainLocalThmIneq})$. 
\end{proof}


\section{The Global Equidistribution Theorem}\label{GlobalSection}

\subsection{Global function fields}

Let $K$ be a field equipped with a set $M_K$ of inequivalent, nontrivial, discrete valuations on $K$ such that:
\begin{itemize}
	\item $\{v\in M_K \mid v(a)\neq0\}$ is finite for each $a\in K^\times$;
	\item $\sum_{v\in M_K}v(a)=0$ for each $a\in K^\times$ (product formula);
	\item $[L:K]=\sum_{w|v}[L_w:K_v]$ for each finite extension $L/K$ and each $v\in M_K$ (local degree formula).
\end{itemize}
The sum in the local degree formula is over all valuations $w$ on $L$ whose restriction to $K$ coincides with $v\in M_K$ (in this case we write $w|v$), and $K_v$ (resp. $L_w$) denotes the completion of $K$ at $v$ (resp. $L$ at $w$).  We will refer to the elements of $M_K$ as {\em places} of $K$, and instead of the additive valuation $v$, we will usually use the corresponding non-archimedean absolute value $|\cdot|_v=e^{-v(\cdot)}$.  We will call a field $K$ satisfying these axioms a {\em global function field}.  The most commonly encountered example is given by the field $K=k(C)$ of rational functions on an integral, proper, geometrically connected curve $C$ defined over an arbitrary field $k$; in this case the places of $K$ correspond to the scheme-theoretic closed points of $C$.  There are other examples, however; see \cite{BombieriGubler} $\S$~1.4.6 and \cite{Lang} $\S$~2.3.

The local degree formula always holds for separable finite extensions $L/K$ (\cite{BombieriGubler} Cor. 1.3.2).  It was pointed out to us by the referee that, even when $L/K$ is inseparable and the local degree formula fails, a suitable product formula can be recovered for $L$, provided one takes care to renormalize the valuations $w\in M_L$ appropriately.  This is worked out in \cite{BombieriGubler} $\S$~1.4 and \cite{Serre} $\S$~2.1.

\medskip

We will now summarize the basic properties of global function fields; for detailed proofs see Lang \cite{Lang}.  First, it follows from the local degree formula that the product formula holds in the form
\begin{equation*}
\prod_{w\in M_L}|a|_w^{[L_w:K_w]}=1 
\end{equation*}
for each finite extension $L/K$, where $M_L$ denotes the set of absolute values on $L$ restricting to an absolute value in $M_K$.

Given a finite extension $L/K$, define the {\em constant field} of $L$ by $L_0=\{a\in L \mid |a|_w\leq1 \text{ for all }w\in M_L \}$.  It follows from the ultrametric inequality and the product formula that $L_0$ is a subfield of $L$, and that $|a|_w=1$ for all $a\in L_0^\times$ and all $w\in M_L$.  Let $\Kbar$ be the algebraic closure of $K$, and let $\Kbar_0=\cup_{L/K}L_0$ denote the constant field of $\Kbar$.  

For each $v\in M_K$, let $\KK_v$ denote the completion of the algebraic closure $\Kbar_v$ of $K_v$; the absolute value $|\cdot|_v$ extends uniquely to the field $\KK_v$, which is complete and algebraically closed (\cite{BGR}, $\S$3.4).  Given a finite extension $L/K$ and a place $v\in M_K$, each $K$-embedding $\epsilon:L\hookrightarrow\KK_v$ determines a valuation $w\in M_L$ over $v$ by $|\cdot|_w=|\epsilon(\cdot)|_v$, and conversely every place $w\in M_L$ arises in this way.  If $\epsilon,\delta:L\hookrightarrow\KK_v$ are two such $K$-embeddings, they determine the same element $w\in M_L$ if and only if $\epsilon=\sigma\circ\delta$ for some $\sigma$ in the group $\Aut(\KK_v/K_v)$ of continuous $K_v$-automorphisms of $\KK_v$.

Finally, suppose that $L/K$ is a finite normal extension, and suppose that $\epsilon,\delta:L\hookrightarrow\KK_v$ are two $K$-embeddings.  Then $\epsilon(L)=\delta(L)$, and therefore $\epsilon=\delta\circ\alpha$ for some $\alpha\in\Aut(L/K)$.

\subsection{The absolute Weil height}

The absolute Weil height $h:\PP^1(\Kbar)\to\RR$ is defined as follows.  Given a point $x=(x_0:x_1)\in\PP^1(\Kbar)$, select a finite extension $L/K$ containing the coordinates of $x$, and set 
\begin{equation}\label{WeilHeight}
h(x)=\sum_{w\in M_L}\frac{[L_w:K_w]}{[L:K]}\log\max\{|x_0|_w,|x_1|_w\}.
\end{equation}
By the product formula and the local degree formula the value of $h(x)$ does not depend on the choice of coordinates or the choice of $L$ containing them.  We also denote by $h:\Kbar\to\RR$ the affine height given by $h(x)=h(1:x)$ for $x\in\Kbar$.  It is easy to check that $h(x)\geq0$ for all $x\in\Kbar$, with $h(x)=0$ if and only if $x\in \Kbar_0$.

\subsection{Elliptic curves and the N\'eron-Tate height}

Let $E/K$ be an elliptic curve over $K$.  The N\'eron-Tate height function $\hhat:E(\Kbar)\to\RR$ is defined by
\begin{equation*}
\hhat(P) = \frac{1}{2}\lim_{n\to+\infty}\frac{h(x(nP))}{n^2},
\end{equation*}
where $x:E(\Kbar)\to\PP^1(\Kbar)$ is the $x$-coordinate function associated to a Weierstrass equation for $E/K$.  For the proofs of the following basic facts about $\hhat$, see Silverman \cite{SilvermanI} $\S$~VIII.9 (strictly speaking \cite{SilvermanI} treats the number field case only, although the arguments work in the function field setting).  For a general treatment of N\'eron-Tate heights on abelian varieties over arbitrary global fields, see Bombieri-Gubler \cite{BombieriGubler} $\S$~9.2.

The N\'eron-Tate height function satisfies $\hhat(P)=(1/2)h(x(P))+O(1)$ and $\hhat(nP)=n^2\hhat(P)$ for all $P\in E(\Kbar)$, and in fact $\hhat$ can be characterized as the unique function satisfying these two properties.  The definition of $\hhat$ is independent of the $x$-coordinate function used to define it.  

Using the above properties of $\hhat$ it is easy to see that $\hhat(P)\geq0$ for all $P\in E(\Kbar)$, and
\begin{equation}\label{TorsionHt}
P\in E(\Kbar)_\tor \hskip1cm \Rightarrow \hskip1cm \hhat(P)=0.
\end{equation}
However, unlike in the number field case, the converse of $(\ref{TorsionHt})$ may fail.  For example, if $E$ is defined over the constant field $K_0$ of $K$, then every point in $E(\Kbar_0)$ has canonical height zero, and $E(\Kbar_0)$ may contain nontorsion points (if $K_0$ is infinite).  See Lang \cite{Lang} $\S$~6.5 and Baker \cite{Baker2} App. B for results which imply the converse to $(\ref{TorsionHt})$ under certain conditions on $E/K$.

Finally, we note for future use a fundamental property of the N\'eron-Tate height, the parallelogram law:
\begin{equation}\label{ParLaw}
\hhat(P+Q) + \hhat(P-Q) = 2\hhat(P)+2\hhat(Q) \hskip1cm (P,Q\in E(\Kbar)).
\end{equation}

\subsection{Local height functions}

For each place $v\in M_K$ we may consider $E$ as an elliptic curve over $\KK_v$, and we generally affix a subscript $v$ to each local object associated to $E/\KK_v$ introduced in $\S$\ref{BerkAnalyticSection} and $\S$\ref{MainLocalSection}.  Thus $\Esf_v$ denotes the Berkovich analytic space associated to $E/\KK_v$, $\lambda_v:E(\KK_v)\setminus\{O\}\to\RR$ is the N\'eron local height function discussed in $\S$\ref{NeronFunction}, and likewise for $i_v$, $j_v$, $\Sigma_v$, $\mu_v$, etc.

Let $L/K$ be a finite extension.  For each $w\in M_L$, let $\lambda_w:E(L_w)\setminus\{O\}\to\RR$ denote the associated N\'eron local height function.  By the uniqueness of such functions, if the place $w$ corresponds to the $K$-embedding $\epsilon:L\hookrightarrow\KK_v$, then $\lambda_w(\cdot)=\lambda_v(\epsilon(\cdot))$.

Given a global point $P\in E(L)\setminus\{O\}$, we have a decomposition 
\begin{equation}\label{LocalGlobalHeight}
\hhat(P) = \sum_{w\in M_L}\frac{[L_w:K_w]}{[L:K]}\lambda_w(P)
\end{equation}
of the global height into local terms.  For the proof of $(\ref{LocalGlobalHeight})$ see \cite{SilvermanII} $\S$VI.2 (the proof is written for number fields but works in this setting) or \cite{CallSilverman}.

\subsection{The global equidistribution theorem}

In this section we will prove the quantitative global equidistribution result (Theorem~\ref{QuanSUZIntro}), along with its qualitative corollary (Corollary~\ref{MainGlobalCorIntro}).  Given a nonempty finite subset $Z$ of $E(\Kbar)$, recall that its height $\hhat(Z)$ is defined to be the average $\hhat(Z)=(1/|Z|)\sum_{P\in Z}\hhat(P)$ of the height of its points.  The following is a more precise statement of Theorem~\ref{QuanSUZIntro}.  

\begin{thm}\label{QuanSUZ}
Let $E/K$ be an elliptic curve over a global function field, let $v\in M_K$ be a place of $K$, and let $\epsilon:\Kbar\hookrightarrow\KK_v$ be a $K$-embedding.  Let $Z$ be a nonempty finite $\Aut(\Kbar/K)$-stable subset of $E(\Kbar)$, and let $F\in \Scal(\Esf_v,\RR)$ be a test function on $\Esf_v$ with associated connected metrized subgraph $\Gamma$ containing $\Sigma_v$.  Then
\begin{equation*}
\bigg|\frac{1}{|Z|}\sum_{P\in Z}F(\epsilon(P))-\int F d\mu_v\bigg| \leq \|F'\|_{L^2(\Gamma)} \bigg(4\hhat(Z) + \frac{h(j_E)}{12|Z|} + \frac{\ell_0(\Gamma)}{|Z|}\bigg)^{1/2}.
\end{equation*}
\end{thm}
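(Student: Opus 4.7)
The plan is to reduce Theorem~\ref{QuanSUZ} directly to the main local inequality (Theorem~\ref{MainLocalThm}), applied at the place $v$ to the set $\epsilon(Z) \subset E(\KK_v)$. Comparing the two statements, this immediately delivers the desired estimate provided one can establish the global-to-local discrepancy bound
\[
D_v(\epsilon(Z)) \;\leq\; 4\hhat(Z) + \frac{h(j_E)}{12|Z|}.
\]
The entire content of the theorem is thereby concentrated into this single inequality.

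To prove it, I would first observe that $\epsilon(Z)$ is independent of the choice of $K$-embedding $\epsilon:\Kbar\hookrightarrow\KK_v$: any other such embedding has the form $\epsilon \circ \sigma$ for some $\sigma \in \Aut(\Kbar/K)$, and the $\Aut(\Kbar/K)$-stability of $Z$ forces $\sigma(Z) = Z$. Writing $Z = \{P_1, \dots, P_N\}$ and selecting any finite extension $L/K$ whose coordinates contain $Z$, the same observation applied to the embeddings $L \hookrightarrow \KK_v$ that realize the various places $w \mid v$ shows that, for each such $w$, the multiset $\{\epsilon_w(P_m - P_n) : m \neq n\}$ coincides with $\{\epsilon(P_m - P_n) : m \neq n\}$. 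Combining this with $\lambda_w(\cdot) = \lambda_v(\epsilon_w(\cdot))$, the local degree formula, and the local-global height decomposition~$(\ref{LocalGlobalHeight})$, one obtains the clean identity
\[
\sum_{v \in M_K} \frac{1}{N^2}\sum_{m \neq n} \lambda_v(\epsilon(P_m - P_n)) \;=\; \frac{1}{N^2}\sum_{m \neq n}\hhat(P_m - P_n).
\]
Using the definition~$(\ref{LocalDiscDef})$ of $D_v$ together with $\sum_{v \in M_K} \log^+|j_E|_v = h(j_E)$ (valid since $j_E \in K$), this rearranges to $\sum_v D_v(\epsilon(Z)) = \frac{1}{N^2}\sum_{m \neq n}\hhat(P_m - P_n) + h(j_E)/(12N)$. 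Finally, summing the parallelogram law~$(\ref{ParLaw})$ over all ordered pairs and discarding the nonnegative $\hhat(P_m + P_n)$ terms yields $\sum_{m \neq n}\hhat(P_m - P_n) \leq 4N^2 \hhat(Z)$, so $\sum_v D_v(\epsilon(Z)) \leq 4\hhat(Z) + h(j_E)/(12N)$. Since each $D_v$ is individually nonnegative (as noted in the proof of Theorem~\ref{MainLocalThm}), the $D_v$ at the single place of interest is dominated by the entire sum, and the required bound follows.

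The chief obstacle is the bookkeeping in the second step: the sum over $w \in M_L$ weighted by $[L_w:K_w]/[L:K]$ must be converted into a clean sum over $v \in M_K$, and this is precisely where the hypothesis that $Z$ is $\Aut(\Kbar/K)$-stable is essential --- it forces every $K$-embedding above a fixed $v$ to contribute the same inner sum $\sum_{m \neq n}\lambda_v(\epsilon(P_m - P_n))$. Once this identification is verified carefully, the remainder of the argument is a short manipulation built from the parallelogram law, the local-global decomposition of $\hhat$, and the nonnegativity of the individual local discrepancies.
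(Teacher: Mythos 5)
Your proposal is correct and takes essentially the same route as the paper: both reduce to the main local inequality via the global-to-local bound on $D_v(\epsilon(Z))$, obtained by summing the local discrepancies over all places, converting the result into $\sum_{m\neq n}\hhat(P_m-P_n)/N^2 + h(j_E)/(12N)$ via the local decomposition of $\hhat$ and the local degree formula, bounding with the parallelogram law, and finally extracting the place $v$ using nonnegativity of each $D_v$. The paper works with the weighted sum $D=\sum_{w\in M_L}\frac{[L_w:K_w]}{[L:K]}D_w(Z)$ over places of $L$ and then identifies the $w\mid v$ contribution with $D_v(\epsilon(Z))$, while you fold the local degree formula in immediately to work with $\sum_{v\in M_K}D_v(\epsilon(Z))$ directly; this is a cosmetic reorganization of the same bookkeeping, and your verification of why each $w\mid v$ contributes the same inner sum (via the multiset invariance coming from $\Aut(\Kbar/K)$-stability and normality of $L/K$) is exactly the point the paper also highlights.
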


\begin{proof}
Let $L/K$ be a finite normal extension such that $Z=\{P_1,\dots,P_N\}\subseteq E(L)$.  Given a place $w\in M_L$ corresponding to some $K$-embedding $\delta:L\hookrightarrow\KK_u$, where $u\in M_K$ is the place below $w$, define $D_w(Z)=D_u(\delta(Z))$, where $D_u$ denotes the local discrepancy defined on subsets of $E(\KK_u)$, as in $\S$\ref{CanMeasLocDisc}.

We are going to bound the sum
\begin{equation*}
D = \sum_{w\in M_L} \frac{[L_w:K_w]}{[L:K]}D_w(Z)
\end{equation*}
from above globally and from below locally.  First the global bound:
\begin{equation}\label{GlobalUB}
\begin{split}
D & = \sum_{w\in M_L}\frac{[L_w:K_w]}{[L:K]}\bigg\{\frac{1}{N^2}\sum_{m\neq n}\lambda_w(P_m-P_n) + \frac{\log^+|j_E|_w}{12 N}\bigg\} \\
	& = \frac{1}{N^2}\sum_{1\leq m,n\leq N}\hhat(P_m-P_n) + \frac{h(j_E)}{12N} \\
	& \leq \frac{1}{N^2}\sum_{1\leq m,n\leq N}(2\hhat(P_m)+2\hhat(P_n)) + \frac{h(j_E)}{12N} \\
	& = 4\hhat(Z) + \frac{h(j_E)}{12N}
\end{split}
\end{equation}
by the decompositions $(\ref{WeilHeight})$ and $(\ref{LocalGlobalHeight})$, the parallelogram law, and the nonnegativity of the height $\hhat$.  At places $w\in M_L$ with $w\nmid v$, we use the trivial lower bound $D_w(Z)\geq0$ stemming from the fact that the local discrepancy $(\ref{LocalDiscDef})$ is nonnegative.

Finally, we treat the places $w\in M_L$ with $w|v$.  We first observe that $D_w(Z)=D_v(\epsilon(Z))$ for all $w|v$, where $\epsilon:L\hookrightarrow\KK_v$ is the fixed $K$-embedding in the statement of the theorem.  [Proof: Any $w|v$ is induced by {\em some} $K$-embedding $\delta:L\hookrightarrow\KK_v$, and since $L/K$ is normal $\delta=\epsilon\circ\alpha$ for some $\alpha\in\Aut(L/K)$.  By hypothesis $Z$ is $\Aut(L/K)$-invariant.]  We deduce from the local degree formula that $\sum_{w|v} \frac{[L_w:K_v]}{[L:K]}D_w(Z)=D_v(\epsilon(Z))$, and assembling $(\ref{GlobalUB})$ with the local considerations we have $D_v(\epsilon(Z))\leq 4\hhat(Z) + h(j_E)/12N$.  The stated inequality in Theorem \ref{QuanSUZ} now follows from $(\ref{MainLocalThmIneq})$.
\end{proof}

\begin{proof}[Proof of Corollary~\ref{MainGlobalCorIntro}]
Let $F:\Esf_v\to\RR$ be a continuous function, and let $\theta>0$.  Since $\Scal(\Esf_v,\RR)$ is dense in $\Ccal(\Esf_v,\RR)$ (with respect to the supremum norm), there exists a function $F^*\in \Scal(\Esf_v,\RR)$ such that $|F(x)-F^*(x)|\leq\theta$ for all $x\in \Esf_v$.  Since $\hhat(Z_n)\to0$ and $|Z_n|\to+\infty$, it follows from Theorem~\ref{QuanSUZ} that the desired limit formula $(\ref{MainGlobalCorLimForm})$ holds with $F$ replaced by $F^*$.  Using the triangle inequality we have
\begin{eqnarray*}
\bigg|\frac{1}{|Z_n|}\sum_{P\in Z_n}F(\epsilon(P))-\int F d\mu_v\bigg| & \leq &  \bigg|\frac{1}{|Z_n|}\sum_{P\in Z_n}(F(\epsilon(P))-F^*(\epsilon(P)))\bigg| \\
	& & + \bigg|\frac{1}{|Z_n|}\sum_{P\in Z_n}F^*(\epsilon(P))-\int F^* d\mu_v\bigg| \\
	& & + \bigg|\int (F^*-F) d\mu_v\bigg|,
\end{eqnarray*}
and since $\int 1d\mu_v=1$ and $|F(x)-F^*(x)|\leq\theta$ we deduce that
\begin{equation}\label{MainGlobalCorIneq}
\limsup_{n\to+\infty}\bigg|\frac{1}{|Z_n|}\sum_{P\in Z_n}F(\epsilon(P))-\int F d\mu_v\bigg|\leq 2\theta.
\end{equation}
Since $\theta>0$ is arbitrary, the left-hand-side of $(\ref{MainGlobalCorIneq})$ is zero, completing the proof of $(\ref{MainGlobalCorLimForm})$.
\end{proof}


\section{The Finiteness of $S$-integral Torsion Points}\label{SIntSection}

\subsection{Introduction}

In this section we will prove Theorem \ref{BakerIhRumelyFF2Intro}; for the convenience of the reader we will restate the theorem below.  Let $K$ be a global function field and let $E/K$ be an elliptic curve.  Given a place $v$ of $K$ and a point $R\in E(\KK_v)$, define 
\begin{equation}\label{Mconstant}
M_v(R) = \inf_{P\in E(\KK_v)_\tor}d_v(P,R),
\end{equation}
where $d_v$ is the $v$-adic metric on $E(\KK_v)$ defined in $\S$\ref{BerkSection}.  Thus $M_v(R)$ measures the distance between $R$ and its nearest torsion point in $E(\KK_v)$.  Plainly $M_v(R)=0$ if $R$ is a torsion point, but if $R$ is nontorsion, then the discreteness of $E(\KK_v)_\tor$ (Lemma \ref{TorsionDiscrete}) and the definition of the metric $d_v$ shows that $0<M_v(R)\leq1$.  If $Q\in E(\Kbar)$ is a global point, define
\begin{equation}\label{MconstantGlobal}
M_v(Q) = \inf_{\epsilon}M_v(\epsilon(Q)) \hskip1cm m_v(Q)=-\log M_v(Q),
\end{equation}
the infimum over all $K$-embeddings $\epsilon:\Kbar\hookrightarrow\KK_v$; plainly $0<M_v(Q)\leq1$ if $Q$ is nontorsion, since $Q$ has only finitely many embeddings into $E(\KK_v)$.  Thus $m_v(Q)$ is nonnegtive, and finite provided $Q$ is nontorsion.

\begin{thm}\label{BakerIhRumelyFF2}
Let $E/K$ be an elliptic curve over a global function field.  Let $(\ref{Weierstrass})$ be a Weierstrass equation for $E/K$, let $S$ be a finite set of places of $K$ such that $(\ref{Weierstrass})$ is defined over $\Ocal_S$, and such that $|\Delta|_v=1$ for all $v\not\in S$, where $\Delta\in K$ is the discriminant of $(\ref{Weierstrass})$.  Let $\Ecal/\Ocal_S$ denote the associated $S$-integral Weierstrass model for $E$.  If $Q$ is a point in $E(\Kbar)$ with $\hhat(Q)>0$, then 
\begin{equation}\label{BakerIhRumelyFF2Ineq}
|\Ecal(\bar{\Ocal}_S,Q)_\tor| \leq \frac{1}{\hhat(Q)^{2}}\bigg(\frac{|S|h(j_E)}{12}+\sum_{v\in S} m_v(Q)\bigg)^2.
\end{equation}
\end{thm}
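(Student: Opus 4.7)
The plan is to apply Theorem~\ref{QuanSUZ} at each $v\in S$ to the set $Z:=\Ecal(\bar{\Ocal}_S,Q)_\tor$ against a carefully chosen test function that realizes the local height $\lambda_v(\,\cdot\,-\epsilon(Q))$ on torsion points, then combine these with the local--global decomposition of $\hhat(P-Q)$ to produce an upper bound of the shape $|Z|\hhat(Q)\leq\sqrt{|Z|}\,C$, which upon squaring becomes $(\ref{BakerIhRumelyFF2Ineq})$.

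First I note that the set $Z$ is automatically $\Aut(\Kbar/K)$-stable: for any $P\in E(\Kbar)$ and $\sigma\in\Aut(\Kbar/K)$, the scheme-theoretic closures of $P$ and $\sigma(P)$ in $\Ecal/\Ocal_S$ coincide, so the $S$-integrality condition depends only on the Galois orbit of $P$ (and of $Q$). I fix a finite normal $L/K$ with $Q,Z\subset E(L)$ (working with an arbitrary finite subset of $Z$ if necessary; the resulting bound is uniform). Since each $P\in Z$ is torsion and the N\'eron--Tate pairing vanishes on torsion, $\hhat(P-Q)=\hhat(Q)$, so by $(\ref{LocalGlobalHeight})$
\begin{equation*}
|Z|\,\hhat(Q)=\sum_{P\in Z}\hhat(P-Q)=\sum_{w\in M_L}\frac{[L_w:K_w]}{[L:K]}\sum_{P\in Z}\lambda_w(P-Q).
\end{equation*}
At any place $v\notin S$ the good reduction hypothesis forces $j_w\equiv 0$ and $\lambda_w(P-Q)=i_w(P,Q)$, which vanishes by $S$-integrality together with $(\ref{ReductionIFunction})$. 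Thus only $v\in S$ contributes.

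Next, at each $v\in S$ and each $w|v$ corresponding to an embedding $\epsilon_w:L\hookrightarrow\KK_v$, I construct a test function $F_w\in\Scal(\Esf_v,\RR)$ with $F_w(\epsilon_w(P))=\lambda_w(P-Q)$ for every torsion $P$. Let $\xi_w$ be the point at path distance $m_v(Q)$ from $r_{\Sigma_v}(\epsilon_w(Q))$ along the segment $[r_{\Sigma_v}(\epsilon_w(Q)),\epsilon_w(Q)]$, let $\Gamma_w=\Sigma_v\cup[r_{\Sigma_v}(\epsilon_w(Q)),\xi_w]$, and define $F_w$ to factor through the retraction $r_{\Gamma_w}$ via $F_w|_{\Sigma_v}(x)=j_v(x,\epsilon_w(Q))$ and $F_w(\gamma)=\rho(r_{\Sigma_v}(\epsilon_w(Q)),\gamma)+\tfrac{1}{12}\log^+|j_E|_v$ on the segment. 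Routine verifications give: continuity at the junction (both branches evaluate to $\tfrac{1}{12}\log^+|j_E|_v$); $\int F_w\,d\mu_v=0$ since the second Bernoulli polynomial has mean zero; $\|F_w'\|_{L^2(\Gamma_w)}^2=m_v(Q)+\tfrac{1}{12}\log^+|j_E|_v$ using $\Phi'=2\Psi$ and $\int_0^1\Psi^2=1/12$; $\ell_0(\Gamma_w)=m_v(Q)$; and, crucially, $F_w(\epsilon_w(P))=i_v(\epsilon_w(P),\epsilon_w(Q))+j_v(\epsilon_w(P),\epsilon_w(Q))=\lambda_w(P-Q)$ for every torsion $P$, by a short case analysis on whether $\epsilon_w(P)$ lies in the same connected component of $\Esf_v\setminus\Sigma_v$ as $\epsilon_w(Q)$, combined with the bound $i_v(P',\epsilon_w(Q))\leq m_v(Q)$ for every $P'\in E(\KK_v)_\tor$ that prevents the truncation at $\xi_w$ from biting.

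Applying Theorem~\ref{QuanSUZ} with $\hhat(Z)=0$ and $\int F_w\,d\mu_v=0$ then yields
\begin{equation*}
\bigg|\sum_{P\in Z}\lambda_w(P-Q)\bigg|\leq\sqrt{|Z|}\sqrt{m_v(Q)+\tfrac{\log^+|j_E|_v}{12}}\,\sqrt{m_v(Q)+\tfrac{h(j_E)}{12}}\leq\sqrt{|Z|}\Big(m_v(Q)+\tfrac{h(j_E)}{12}\Big),
\end{equation*}
the final step using $\log^+|j_E|_v\leq h(j_E)$. Averaging over $w|v$ (weights summing to $1$) and summing over $v\in S$ by the triangle inequality gives $|Z|\hhat(Q)\leq\sqrt{|Z|}\big(\sum_{v\in S}m_v(Q)+|S|h(j_E)/12\big)$; dividing by $\hhat(Q)>0$ and squaring produces $(\ref{BakerIhRumelyFF2Ineq})$. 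When $S=\emptyset$ the right-hand side of the global identity is $0$, forcing $|Z|=0$. The main obstacle will be the explicit construction of $F_w$: one needs the truncation at distance $m_v(Q)$ to be inert on torsion (guaranteed by the definition of $m_v(Q)$ together with Lemma~\ref{TorsionDiscrete}, which ensures $m_v(Q)<\infty$ since $\hhat(Q)>0$ prevents $Q$ from being torsion), one needs continuity across the junction of segment and skeleton, and one needs the $L^2$-norm of $F_w'$ on the skeleton to come out to $\log^+|j_E|_v/12$, which rests on the $\ell$-periodic Fourier/Bernoulli identities developed in $\S$\ref{EquidistRZ}.
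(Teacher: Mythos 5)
Your proposal is correct and follows essentially the same route as the paper: decompose $|Z|\hhat(Q)$ into local terms, kill the contribution at $v\notin S$ using the good-reduction consequence of $S$-integrality via $(\ref{ReductionIFunction})$, and bound the contribution at $v\in S$ by applying Theorem~\ref{QuanSUZ} to a truncated N\'eron local height function whose $L^2$-norm and vanishing integral against $\mu_v$ are exactly as in the paper's Lemma~\ref{LocalTruncLemma} with $\Rcal$ a singleton. The only difference is organizational: the paper averages $Q$ over its full Galois orbit $\Qcal$ so that a single embedding $\epsilon$ and a single embedding-independent test function $G_v$ suffice at each $v$, whereas you apply the theorem separately for each $w\mid v$ with its own embedding $\epsilon_w$ and test function $F_w$ and then average the bounds with the local-degree weights; the two bookkeepings yield the same inequality.
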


Our proof follows the basic strategy used by Baker-Ih-Rumely \cite{BakerIhRumely} in the number field case, with some slight differences.  The idea is to use the equidistribution theorem at each place $v\in S$, testing a large set of global torsion points against truncated N\'eron local height functions.  

\subsection{Truncated N\'eron local height functions}

Fix a place $v$ of $K$.  In order to prove Theorem \ref{BakerIhRumelyFF2} we will use certain test functions in the space $\Scal(\Esf_v,\RR)$ which are formed by truncating linear combinations of N\'eron local height functions.  The work in this section is entirely local and thus we suppress the dependence on the place $v$ in all of our notation.

Let $\Rcal$ be a nonempty finite subset of $E(\KK)$, and define $F:\Esf\setminus\Rcal\to\RR$ by 
\begin{equation*}
F(x)=\frac{1}{|\Rcal|}\sum_{R\in \Rcal}\lambda(x-R).
\end{equation*}
Note that $F$ is continuous on $\Esf\setminus\Rcal$ but has logarithmic singularities at the points of $\Rcal$.  However, fix a constant $m>0$ and define the truncated function $G:\Esf\to\RR$ by
\begin{equation*}
G(x)=\frac{1}{|\Rcal|}\sum_{R\in \Rcal}\{j(x,R) + i^*(x,R)\},
\end{equation*}
where $i^*(x,R)=\min\{i(x,R),m\}$.  For each $R\in\Rcal$ let $\kappa_R$ be the unique point in the infinitely long line segment $[r_\Sigma(R),R]$ in $\Esf$ which satisfies $\rho(r_\Sigma(R),\kappa_R)=m$.  Let 
\begin{equation*}
\Gamma=\Sigma\cup\big(\bigcup_{R\in\Rcal}[r_\Sigma(R),\kappa_R]\big)
\end{equation*}
be the smallest connected metrized subgraph of $\Esf\setminus E(\KK)$ which contains $\Sigma$ and all of the $\kappa_R$.  

\begin{lem}\label{LocalTruncLemma}
The function $G:\Esf\to\RR$ is in the space $\Scal_\Gamma(\Esf,\RR)$ of test functions associated to $\Gamma$, $\ell_0(\Gamma)=m$, and
\begin{equation}\label{LocalTruncLemmaIneq}
\|G'\|_{L^2(\Gamma)}^2 \leq \frac{1}{12}\log^+|j_E| + m.
\end{equation}
\end{lem}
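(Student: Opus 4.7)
The plan is to verify the three defining properties of $\Scal_\Gamma(\Esf,\RR)$ for $G$ and then establish the $L^2$ bound. The identity $\ell_0(\Gamma)=m$ is immediate from the construction, since each $\kappa_R$ lies at distance exactly $m$ from the skeleton.

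For membership in $\Scal_\Gamma(\Esf,\RR)$, both $j(\cdot,R)$ and $i^*(\cdot,R)=\min(i(\cdot,R),m)$ are continuous on $\Gamma\subset\Esf\setminus E(\KK)$, since $i$ is continuous away from the diagonal of $E(\KK)\times E(\KK)$. For the factorization through $r_\Gamma$, the function $j$ factors through $r_\Sigma$ by its very definition $(\ref{JDefinition})$, and hence through $r_\Gamma$. For $i^*$, the key observation is that $r_\Gamma(R)=\kappa_R$, which combined with the rule $(\ref{RetVeeRule})$ gives $r_\Gamma(x\vee R)=r_\Gamma(x)\vee\kappa_R$ whenever $x$ and $R$ lie in the same connected component of $\Esf\setminus\Sigma$. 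A direct geometric inspection then shows that $i^*(x,R)=\rho(r_\Sigma(R),r_\Gamma(x\vee R))$, which depends on $x$ only through $r_\Gamma(x)$; in the case that $x$ and $R$ lie in different components, both sides of the required identity vanish.

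For the $L^2$ bound I will split $\Gamma=\Sigma\cup(\Gamma\setminus\Sigma)$. On $\Sigma$ (nontrivially only in the multiplicative reduction case), one has $i^*(\cdot,R)\equiv0$, so $G|_\Sigma=\frac{1}{|\Rcal|}\sum_R j(\cdot,R)$; the derivative along $\Sigma\simeq\RR/\ell\ZZ$ is
\begin{equation*}
G'|_\Sigma(x)=\frac{1}{|\Rcal|}\sum_{R\in\Rcal}\Psi_\ell(x-r_\Sigma(R)),
\end{equation*}
using $\tfrac{d}{dx}\Phi_\ell=\tfrac{2}{\ell}\Psi_\ell$. Applying Lemma~\ref{GSigmaLemma} with $p_n=r_\Sigma(R_n)$ yields
\begin{equation*}
\int_\Sigma|G'|^2\,dm_\Gamma=\frac{1}{|\Rcal|^2}\sum_{R,R'\in\Rcal}j(R,R')\leq\tfrac{1}{12}\log^+|j_E|,
\end{equation*}
where the last inequality uses the pointwise bound $j(x,y)\leq\ell/12$ coming from $\Phi\leq1/6$.

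On $\Gamma\setminus\Sigma$, viewed as a metric graph, $j(\cdot,R)$ is constant on every edge (since it factors through $r_\Sigma$), while $i^*(\cdot,R)$ is piecewise linear: along any edge $e$ oriented away from $\Sigma$, its derivative equals $+1$ if $e\subset[r_\Sigma(R),\kappa_R]$ and $0$ otherwise. Thus $G'(x)=n(e)/|\Rcal|$ on $e$, where $n(e)=|\{R\in\Rcal:e\subset[r_\Sigma(R),\kappa_R]\}|$. From the identity $\sum_e n(e)\,|e|=\sum_{R\in\Rcal}m=|\Rcal|\cdot m$ together with the pointwise bound $n(e)\leq|\Rcal|$, I obtain
\begin{equation*}
\int_{\Gamma\setminus\Sigma}|G'|^2\,dm_\Gamma=\frac{1}{|\Rcal|^2}\sum_e n(e)^2|e|\leq\frac{1}{|\Rcal|}\sum_e n(e)|e|=m,
\end{equation*}
and adding the two estimates produces $(\ref{LocalTruncLemmaIneq})$. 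The main subtlety to handle carefully will be correctly viewing $\Gamma$ as a metric graph: when two spikes $[r_\Sigma(R),\kappa_R]$ and $[r_\Sigma(R'),\kappa_{R'}]$ in the same component share an initial segment, that segment must be treated as a single edge with $n(e)\geq 2$ rather than as two separate copies, since a naive sum over spikes would overcount and yield a bound weaker than $m$.
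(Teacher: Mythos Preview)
Your proposal is correct and follows essentially the same route as the paper: both verify $G=G\circ r_\Gamma$ by checking $i^*(x,R)=i^*(r_\Gamma(x),R)$, both split the $L^2$-estimate over $\Sigma$ and $\Gamma\setminus\Sigma$, and both bound the off-skeleton piece by the same $n(e)^2\leq|\Rcal|\,n(e)$ trick (the paper phrases it as $\int g_Q'g_R'\leq m$ for each pair $Q,R$). The only cosmetic difference is on $\Sigma$: you invoke identity~$(\ref{G2})$ of Lemma~\ref{GSigmaLemma} to get $\int_\Sigma|G'|^2=\tfrac{1}{|\Rcal|^2}\sum_{R,R'}j(R,R')$ and then bound via $\Phi\leq1/6$, whereas the paper bounds $|\widehat{G'}(k)|\leq 1/(2\pi|k|)$ and applies Parseval directly; since $(\ref{G2})$ is itself proved by Parseval, these are the same computation.
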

\begin{proof}
In order to show that $G\in\Scal_\Gamma(\Esf,\RR)$ we must show that $G=G\circ r_\Gamma$, where $r_\Gamma:\Esf\to\Gamma$ is the retraction map.  Fix a point $x\in\Esf$; we are going to show that for each $R\in\Rcal$,
\begin{equation}\label{ImIndentity}
i^*(x,R) = i^*(r_\Gamma(x),R);
\end{equation}
in view of that fact that the function $j(x,y)$ factors through the retraction map $r_\Sigma:\Esf\to\Sigma$, and therefore the map $r_\Gamma:\Esf\to\Gamma$ as well, this will complete the proof that $G=G\circ r_\Gamma$.

First, suppose that $x$ and $R$ are elements of the same connected component $\Bsf^\circ$ of $\Esf\setminus\Sigma$ as in the decomposition $(\ref{ConnCompPar1})$ or $(\ref{ConnCompPar2})$.  Then set $x_R=x\vee R$, the least upper bound of $x$ and $R$ with respect to the partial order on $\Bsf^\circ$.  If $x_R\in [r_\Sigma(R),\kappa_R]$, then $i(x,R)=\rho(r_\Sigma(x_R),x_R)\leq \rho(r_\Sigma(x_R),\kappa_R)=m$, and since $x_R=x\vee R=r_\Gamma(x)\vee R$, we have $i(x,R)=i(r_\Gamma(x),R)$.  Since both of these quantities are at most $m$, we deduce $(\ref{ImIndentity})$ in this case.  On the other hand, suppose that $x_R\not\in [r_\Sigma(R),\kappa_R]$.  Then by a similar argument as above, both $i(x,R)$ and $i(r_\Gamma(x),R)$ are greater than $m$, whereby both sides of $(\ref{ImIndentity})$ are equal to $m$.  Finally, if $x$ and $R$ are {\em not} both elements of the same connected component $\Bsf^\circ$ of $\Esf\setminus\Sigma$, then both sides of $(\ref{ImIndentity})$ vanish; the proof that $G=G\circ r_\Gamma$ is complete.  The other axioms satisfied by functions in $\Scal_\Gamma(\Esf,\RR)$ are trivially verified for $G$.  The equality $\ell_0(\Gamma)=m$ follows at once from the fact that $\rho(r_\Sigma(R),\kappa_R)=m$ for all $R\in\Rcal$.

Finally, we are going to show that 
\begin{equation}\label{GPrimeIneqs}
\|G'\|_{L^2(\Sigma)}^2 \leq \frac{1}{12}\log^+|j_E| \hskip5mm \text{ and } \hskip5mm \|G'\|_{L^2(\Gamma\setminus\Sigma)}^2 \leq m,
\end{equation}
which together imply $(\ref{LocalTruncLemmaIneq})$.  First, both sides of the first inequality in $(\ref{GPrimeIneqs})$ vanish in the good reduction case, so we may assume that $E/\KK$ has multiplicative reduction; thus $\ell:=\log|j_E|>0$.  Since each $i(x,R)$ ($R\in\Rcal$) vanishes on the skeleton, for $x\in\Sigma\simeq\RR/\ell\ZZ$ we have
\begin{equation*}
G(x) = \frac{1}{|\Rcal|}\sum_{R\in \Rcal}j(x,R) = \frac{1}{|\Rcal|}\sum_{R\in \Rcal}\frac{\ell}{2}\Phi\bigg(\frac{x - r_\Sigma(R)}{\ell}\bigg),
\end{equation*}
and since $\Phi'(x)=2\Psi(x)$, we deduce that $G'(x) = \frac{1}{|\Rcal|}\sum_{R\in \Rcal}\Psi_\ell(x - r_\Sigma(R))$; here we are using the notation of $\S$\ref{EquidistRZ}.  Therefore $\hat{G}'(0)=0$ and for $k\neq0$,
\begin{equation*}
\hat{G}'(k) = \frac{-1}{2\pi ik}\bigg\{\frac{1}{|\Rcal|}\sum_{R\in \Rcal}e^{-2\pi ikr_\Sigma(R)/\ell}\bigg\};
\end{equation*}
in particular $|\hat{G}'(k)|\leq1/2\pi|k|$ for $k\neq0$.  It follows from Parseval's formula that 
\begin{equation*}
\int_0^\ell|G'(x)|^2dx = \ell\sum_{k\in\ZZ}|\hat{G}'(k)|^2 \leq \ell\sum_{k\neq0}\frac{1}{4\pi^2k^2}=\frac{\ell}{12},
\end{equation*}
which proves the first inequality in $(\ref{GPrimeIneqs})$ in this case.  

To prove the second inequality in $(\ref{GPrimeIneqs})$, fix $R\in\Rcal$, and identify the line segment $[r_\Sigma(R),\kappa_R]$, which by the definition of $\kappa_R$ has length $m$, with the real interval $[0,m]$.  Defining $g_R:[r_\Sigma(R),\kappa_R]\to\RR$ by $g_R(x)=i(x,R)$, we have $g_R(x)=\rho(r_\Sigma(R),x)=x$, and thus $g_R'(x)\equiv1$ on $[r_\Sigma(R),\kappa_R]$.  Therefore $\int_{[r_\Sigma(R),\kappa_R]}|g_R'(x)|^2dm_\Gamma(x)=m$.  Extending the definition of $g_R$ by declaring that $g_R(x)=0$ for all $x\not\in[r_\Sigma(R),\kappa_R]$, we have 
\begin{equation*}
G'(x) = \frac{1}{|\Rcal|}\sum_{R\in \Rcal}g_R'(x)
\end{equation*}
for $x\in\Gamma\setminus\Sigma$, and thus
\begin{equation*}
\begin{split}
\int_{\Gamma\setminus\Sigma}|G'(x)|^2dm_\Gamma(x) & = \int_{\Gamma\setminus\Sigma}\Big|\frac{1}{|\Rcal|}\sum_{R\in \Rcal}g_R'(x)\Big|^2dm_\Gamma(x) \\
	& = \frac{1}{|\Rcal|^2}\sum_{Q,R\in \Rcal}\int_{\Gamma\setminus\Sigma}g_Q'(x)g_R'(x)dm_\Gamma(x) \\
	& \leq \frac{1}{|\Rcal|^2}\sum_{Q,R\in \Rcal}m =m,
\end{split}
\end{equation*}
which is the second inequality in $(\ref{GPrimeIneqs})$.
\end{proof}

\subsection{The proofs of Theorems \ref{BakerIhRumelyFF} and \ref{BakerIhRumelyFF2}}

\begin{proof}[Proof of Theorem \ref{BakerIhRumelyFF}]
This follows immediately from Theorem \ref{BakerIhRumelyFF2} since enlarging the set $S$ only makes the set $\Ecal(\bar{\Ocal}_S,Q)_\tor$ larger.
\end{proof}

\begin{proof}[Proof of Theorem \ref{BakerIhRumelyFF2}]
Note that the set $\Ecal(\bar{\Ocal}_S,Q)_\tor$ is $\Aut(\Kbar/K)$-invariant.  Given a finite $\Aut(\Kbar/K)$-invariant subset $Z$ of $\Ecal(\bar{\Ocal}_S,Q)_\tor$, we are going to show that $|Z|$ is at most the right-hand-side of $(\ref{BakerIhRumelyFF2Ineq})$, which will prove the theorem.  We may assume that $Z$ is nonempty, for if $\Ecal(\bar{\Ocal}_S,Q)_\tor$ is empty the theorem is trivial.

Let $\Qcal$ be the set of $\Aut(\Kbar/K)$-conjugates of $Q$, and let $L/K$ be a finite normal extension such that $\Qcal\cup Z\subseteq E(L)$.  For each $w\in M_L$ define
\begin{equation*}
\Lambda_w(Z,\Qcal) = \frac{1}{|\Qcal||Z|}\sum_{P\in Z}\sum_{R\in \Qcal}\lambda_w(P-R),
\end{equation*}
and for each $v\in M_K$ let $\Lambda_v(Z,\Qcal)=\sum_{w|v}\frac{[L_w:K_v]}{[L:K]}\Lambda_w(Z,\Qcal)$.  Thus
\begin{equation}\label{HeightofQ}
\hhat(Q) = \frac{1}{|\Qcal||Z|}\sum_{P\in Z}\sum_{R\in \Qcal}\hhat(P-R) = \sum_{v\in M_K}\Lambda_v(Z,\Qcal)
\end{equation}
by $(\ref{LocalGlobalHeight})$; here we have used the fact that $\hhat(P-R)=\hhat(R)=\hhat(Q)$ for all $P\in Z$ and $R\in \Qcal$, since the points $P$ in $Z$ are torsion points and $\hhat$ is $\Aut(\Kbar/K)$-invariant.
  
We are going to use the $S$-integrality hypothesis to show that 
\begin{equation}\label{Claim1}
\Lambda_v(Z,\Qcal)=0 \hskip1cm\text{ for all }v\in M_K\setminus S,
\end{equation}
and we are going to use equidistribution (Theorem \ref{QuanSUZ}) to show that
\begin{equation}\label{Claim2}
\Lambda_v(Z,\Qcal)\leq \frac{1}{|Z|^{1/2}}\bigg(\frac{h(j_E)}{12}+m_v(Q)\bigg) \hskip1cm\text{ for all }v\in S.
\end{equation}
Assuming that these two claims hold for now, we deduce from $(\ref{HeightofQ})$ that
\begin{equation*}
\hhat(Q) \leq \sum_{v\in S}\frac{1}{|Z|^{1/2}}\bigg(\frac{h(j_E)}{12}+m_v(Q)\bigg) = \frac{1}{|Z|^{1/2}}\bigg(\frac{|S|h(j_E)}{12}+\sum_{v\in S}m_v(Q)\bigg),
\end{equation*}
and the desired inequality $(\ref{BakerIhRumelyFF2Ineq})$ follows immediately, completing the proof of the theorem.  Moreover, our assumption that $Z$ is nonempty ensures that the set $S$ is nonempty; for if $S$ were empty then $(\ref{HeightofQ})$ and $(\ref{Claim1})$ would contradict the assumption that $\hhat(Q)>0$.  In other words, this shows that if $S$ is empty then $\Ecal(\bar{\Ocal}_S,Q)_\tor$ is empty.

\medskip

We will now show $(\ref{Claim1})$.  Assuming $v\in M_K\setminus S$, the Weierstrass equation $(\ref{Weierstrass})$ is defined over $\Ocal_S$ with $|\Delta|_v=1$ and $|j_E|_v\leq1$.  Thus the model $\Ecal/\Ocal_S$ has good reduction at $v$, meaning the fiber $\bar{\Ecal}_v/k_v$ is an elliptic curve over the residue field $k_v=\OO_v/\MM_v$, and the associated reduction map $\pi_v:E(\KK_v)\to\bar{\Ecal}_v(k_v)$ is a group homomorphism.  Since the Zariski closure in $\Ecal$ of each point $P$ in $Z$ does not meet the Zariski closure of $Q$, this means that $\pi_v(\epsilon(P))\neq\pi_v(\epsilon(R))$ for every embedding $\epsilon:L\hookrightarrow\KK_v$ and every $R\in\Qcal$.  Thus if $w|v$ is a place of $L$ corresponding to the embedding $\epsilon$, then $\lambda_w(P-R)=\lambda_v(\epsilon(P)-\epsilon(R))=i_v(\epsilon(P),\epsilon(R))=0$ by $(\ref{ReductionIFunction})$ and Proposition~\ref{LocalHeightProp}.  We conclude that $\Lambda_w(Z,\Qcal)=0$ for all $w|v$, and thus $\Lambda_v(Z,\Qcal)=0$ for all $v\in M_K\setminus S$, which is $(\ref{Claim1})$.

\medskip

It now remains only to prove $(\ref{Claim2})$.  Assume that $v\in S$, and fix once and for all a $K$-embedding $\epsilon:\Kbar\hookrightarrow\KK_v$.  We claim that for each place $w|v$ of $L$, 
\begin{equation}\label{LambdaW}
\Lambda_v(Z,\Qcal)=\Lambda_w(Z,\Qcal)=\frac{1}{|\Qcal||Z|}\sum_{P\in Z}\sum_{R\in \Qcal}\lambda_v(\epsilon(P)-\epsilon(R)).
\end{equation}
To see this, note that each place $w|v$ of $L$ arises from {\em some} $K$-embedding $\delta:\Kbar\hookrightarrow\KK_v$ where $\lambda_w(\cdot)=\lambda_v(\delta(\cdot))$.  Since $L/K$ is a normal extension $\delta=\epsilon\circ\alpha$ for some $\alpha\in\Aut(L/K)$, and therefore since both $Z$ and $\Qcal$ are $\Aut(L/K)$-invariant, the second identity in $(\ref{LambdaW})$ follows.  The first identity in $(\ref{LambdaW})$ follows from the the local degree formula and the fact that the right-hand-side is independent of $w|v$.

Define a function $F_v:\Esf_v\setminus\epsilon(\Qcal)\to\RR$ by $F_v(x)=\frac{1}{|\Qcal|}\sum_{R\in \Qcal}\lambda_v(x-\epsilon(R))$.  Note that $F_v$ is continuous on $\Esf_v\setminus\epsilon(\Qcal)$ but has logarithmic singularities at the points of $\epsilon(\Qcal)$.  To get around this we define the truncated function $G_v:\Esf_v\to\RR$ by
\begin{equation}\label{TruncatedNeron}
G_v(x)=\frac{1}{|\Qcal|}\sum_{R\in \Qcal}\{j_v(x,R) + i_v^*(x,R)\} ,
\end{equation}
where $i_v^*(x,R)=\min\{i_v(x,R),m_v(Q)\}$, and where $m_v(Q)$ is defined in $(\ref{MconstantGlobal})$.  Therefore, using the identity $i_v(P,Q)=-\log d_v(P,Q)$ from $\S$~\ref{NeronFunction}, we still have
\begin{equation}\label{AgreeOnTorsion}
G_v(P) = F_v(P) \hskip1cm \text{ for all } P\in E(\KK_v)_\tor.
\end{equation} 
Note that $0\leq m_v(Q)<+\infty$ since $Q$ is nontorsion.  By its definition, $m_v(Q)$ is the minimal constant at which to truncate $i_v(x,R)$ such that $(\ref{AgreeOnTorsion})$ holds.  Moreover, $F_v$ and $G_v$ agree on the skeleton $\Sigma_v$ of $\Esf_v$.  In particular $\int G_v d\mu_v = \int F_v d\mu_v =0$, since the canonical measure $\mu_v$ is supported on $\Sigma_v$, and $\int\lambda_v(x,y)d\mu_v(x)=\int j_v(x,y)d\mu_v(x)=\int_{\Sigma_v} j_v(x,y)dm_{\Sigma_v}(x)=0$ for all $y\in \Esf_v$; here we have used that $i_v(x,y)=0$ for $x\in\Sigma_v$.  

For each $R\in\Qcal$, let $\kappa_R\in\Esf_v$ be the unique point in the (infinitely long) line segment $[r_{\Sigma_v}(\epsilon(R), \epsilon(R)]$ which satisfies $\rho(r_{\Sigma_v}(\epsilon(R)), \kappa_R)=m_v(Q)$.  Let $\Gamma_v=\Sigma_v\cup(\cup_{R\in\Qcal}[r_{\Sigma_v}(\epsilon(R)),\kappa_R])$ be the smallest connected subgraph of $\Esf_v$ containing $\Sigma_v$ and all of the points $\kappa_R$.  By Lemma \ref{LocalTruncLemma}, $G_v$ is a test function in the space $\Scal_{\Gamma_v}(\Esf_v,\RR)$ associated to the connected metrized subgraph $\Gamma_v$ of $\Esf_v$, 
\begin{equation}\label{TruncTestBound}
\|G'_v\|_{L^2(\Gamma_v)}^2 \leq \frac{1}{12}\log^+|j_E|_v + m_v(Q) \leq \frac{h(j_E)}{12} + m_v(Q),
\end{equation} 
and $\ell_0(\Gamma_v)=m_v(Q)$.  Applying $(\ref{LambdaW})$, Theorem \ref{QuanSUZ}, and the fact that $\int G_v d\mu_v=0$ and $\hhat(Z)=0$, we have
\begin{equation*}
\begin{split}
\Lambda_v(Z,\Qcal) & = \frac{1}{|Z|}\sum_{P\in Z}F_v(\epsilon(P)) \\
	& = \frac{1}{|Z|}\sum_{P\in Z}G_v(\epsilon(P)) \\
	& \leq \|G'_v\|_{L^2(\Gamma_v)} \bigg(\frac{h(j_E)}{12|Z|} + \frac{\ell_0(\Gamma_v)}{|Z|}\bigg)^{1/2} \\
	& \leq \frac{1}{|Z|^{1/2}}\bigg(\frac{h(j_E)}{12} + m_v(Q) \bigg),
\end{split}
\end{equation*}
which is $(\ref{Claim2})$.  The proof is complete.
\end{proof}


\medskip

\end{document}